\newtheorem{theorem}{Theorem}[section]
\newtheorem{proposition}[theorem]{Proposition}
\theoremstyle{definition}
\newtheorem{definition}[theorem]{Definition}
\newtheorem{example}[theorem]{Example}
\theoremstyle{remark}
\numberwithin{figure}{section}
\numberwithin{table}{section}
\begin{document}

\title[Digital $n-$Manifolds With Or Without Boundaries]{Digital $n-$Manifolds With Or Without Boundaries}

\author{MEL\.{I}H \.{I}S}
\date{\today}

\address{\textsc{Melih İs,}
Ege University\\
Faculty of Sciences\\
Department of Mathematics\\
İzmir, Türkiye}
\email{melih.is@ege.edu.tr}

\author{\.{I}SMET KARACA}
\date{\today}

\address{\textsc{İsmet Karaca,}
	Ege University\\
	Faculty of Sciences\\
	Department of Mathematics\\
	İzmir, Türkiye}
\email{ismet.karaca@ege.edu.tr}

\subjclass[2010]{57N65, 58K65, 32Q55, 68U05.}

\keywords{digital topology, manifold, manifold with boundary, orientation, partition of unity.}

\begin{abstract}
       This work aims to define the concept of manifold, which has a very important place in the topology, on digital images. So, a general perspective is provided for two and three-dimensional imaging studies on digital curves and digital surfaces. Throughout the study, the features present in topological manifolds but that are not satisfied in the discrete version are specifically underlined. In addition, other concepts closely related to manifolds such as submanifold, orientation, and partition of unity are also discussed in digital images.  
\end{abstract}

\maketitle

\section{Introduction}
\label{intro}
\quad A topological space locally homeomorphic to $n-$dimensional Euclidean space is called a topological $n-$manifold. It provides for a large variety of global structures while having a local resemblance to the Euclidean space. As a fundamental idea in modern mathematics and physics, a manifold takes the intuitive idea of curves and surfaces to higher dimensions \cite{Lee:2010}. The concept of a manifold is first proposed by Bernhard Riemann in the 19th century and differentiable manifolds are his main area of interest, especially in the setting of geometry and analysis \cite{Riemann:1854}. As topology advanced in the early 20th century, the definition of topological manifolds is presented by the work of mathematicians like Henri Poincaré and Felix Hausdorff \cite{Hausdorff:1914,Poincare:1895}.

\quad Topological manifolds have some essential characteristics that make them essential to science and mathematics. Depending on the situation, they can impose differentiable, Riemannian, or symplectic structures, which is one of their most remarkable characteristics. As an example in physics, especially in general relativity, spacetime is represented as a four-dimensional differentiable manifold with a Lorentzian metric. For another, data science and manifold studies are related, as manifold learning techniques seek to reveal low-dimensional manifold structures hidden behind high-dimensional data \cite{TenenSilLang:2000}. 

\quad Manifolds facilitate the simultaneous examination of both local and global properties. For example, the fact that a manifold is locally flat (differentiable) allows some analysis operations such as differentiation and integration to be performed on it. However, the global topological structure of the manifold dictates the fundamental behavior of these operations. For instance, a $2-$dimensional sphere ($S^{2}$) shares the local properties of the plane ($\mathbb{R}^{2}$) but has a different topological structure globally. In algebraic topology and homotopy theory, invariants such as homology and cohomology groups aid in the classification and structural analysis of manifolds \cite{Spanier:1966}. A manifold's holes, gaps, and other topological characteristics can be expressed numerically using these tools. As an example, the fundamental group of a manifold analyzes its cyclic structures, whereas homology groups offer a broader viewpoint. Such topological and algebraic topological information plays a critical role in the classification of manifolds.

\quad Digital topology investigates the application of traditional topological ideas to digital images and offers numerous significant results on digital curves and surfaces \cite{KongRos:1989}. These structures are so crucial, especially in the fields of digital image processing and computer graphics. Digital surfaces are employed in the structural study of three-dimensional models, and digital curves are frequently utilized to depict boundaries in digital images \cite{Ros:1970,Ros:1979,Herman:1998}. A digital curve is commonly described as a simple closed path \cite{Ros:1970}. According to the Jordan curve theorem in classical topology, a plane is divided into inner and exterior regions by a simple closed curve. This theorem, when applied to the digital plane in digital topology, demonstrates how a digital curve separates a picture into an interior and an exterior region \cite{KongRos:1989}. A digital curve is said to be simple if it simply has the same beginning and ending points and is not closed on itself. The accuracy of algorithms used in digital image processing depends on this feature \cite{Ros:1970}. On the other hand, in three-dimensional digital images, digital surfaces are used to simulate an object's interior structure or boundary \cite{Herman:1998}. The homotopic and homological properties of digital surfaces are similar to those of classical surfaces. Specifically, digital surfaces are analyzed using topological invariants such as the Euler characteristic \cite{Chen:2004}. Applications like volume calculation, boundary detection, and three-dimensional object reconstruction commonly use digital surfaces. Digital surface analysis is essential, particularly in the fields of industrial modeling and medical imaging \cite{Herman:1998}. These examinations on digital curves and surfaces demonstrate both the theoretical and practical value of digital topology.

\quad Historically, the first ideas about the concept of digital manifold, and the works of formalizing these ideas in a systematic order are carried out by Rosenfeld \cite{Ros:1970}, Herman \cite{Herman:1998}, \cite{Chen:1991}, and \cite{ChenZhang:1993}. However, these studies are generally based on parallel moves or cell structures and do not provide an explicit definition of digital manifold via adjacency relations. Mathematically, in this manuscript, we examine the elements that constitute the topological manifold, namely the properties of being Hausdorff, being second-countable, and being local homeomorphism, on digital images and how the concept of digital manifold should be defined using adjacency relations. In Section \ref{sec:1}, we examine how some important definitions and results are used in digital images. After that, in Section \ref{sec:2}, we present the notion of 'digital $n-$manifold' (and also 'digital $n-$manifold with boundary') by interpreting the discrete version of the local homeomorphism property (note that we shall explain the fact that this is enough to define a topological $n-$manifold in digital topology in the section). Furthermore, we immediately exemplify this new concept by considering the digital intervals, digital circles, or any subsets of $\mathbb{Z}^{n}$. Also, we mention some examples of digital images that do not have a digital manifold structure. In addition, the important point that we frequently refer to in this section is to reveal the final state of the manifold properties on the discrete setting, which exist in topological spaces but change in digital images. We also deal with some manifold-related notions such as submanifolds, orientations, and partitions of unities. In Section \ref{conc:5}, we present some open problems for many concepts in the previous section and guidance for future digital manifold works.

\section{Preliminaries}
\label{sec:1}

\quad In this section, we introduce the discrete version of some topological concepts such as digital image, adjacency relation, digital homotopy, digital isomorphism, etc., which are the basic elements of digital topology.

\quad Assume that $M$ is a subset of $\mathbb{Z}^{n}$ and $\kappa_{l}$ is a given adjacency relation over the points of $M$. Then a \textit{digital image} is a binary form that works on discrete structures and has an adjacency relation rather than a topology \cite{Kong:1989}. It is generally denoted by the pair $(M,\kappa_{l})$ and the adjacency relation $\kappa_{l}$ is defined as follows:

\begin{definition}\cite{Kong:1989}
	Given any two distinct points $m = (m_{1},\cdots,m_{n})$ and \linebreak$m^{'} = (m^{'}_{1},\cdots,m^{'}_{n})$ of $\mathbb{Z}^{n}$, and an integer $l$ with $1 \leq l \leq n$, we say that $m$ is called \textit{$\kappa_{l}-$adjacent to $m^{'}$} if the following satisfy:
	\begin{itemize}
		\item There exist the maximum number of $l$ indices $k \in \{1,\cdots,n\}$ with the condition $|m_{k}-m^{'}_{k}| = 1$.
		\item The equality $m_{s} = m^{'}_{s}$ holds for any indices $s \in \{1,\cdots,n\}$ with the condition $|m_{s}-m^{'}_{s}| \neq 1$.
	\end{itemize}
\end{definition}

\quad For simple illustrations of adjacency relations, we observe that
\begin{itemize}
	\item $\kappa_{1} = 2-$adjacency in $\mathbb{Z}$,
	\item $\kappa_{1} = 4-$adjacency and $\kappa_{2} = 8-$adjacency in $\mathbb{Z}^{2}$, and
	\item $\kappa_{1} = 6-$adjacency, $\kappa_{2} = 18-$adjacency, and $\kappa_{3} = 26-$adjacency in $\mathbb{Z}^{3}$.
\end{itemize}
The fact that $m$ is $\kappa_{l}-$adjacent to $m^{'}$ is shown by the notation $m \sim_{\kappa_{l}} m^{'}$. 

\quad Assume that $(M,\kappa_{l})$ is a digital image and $m$ is any point of $M$. Then the \textit{digital neighborhood} \cite{Boxer:2017} of $m$ is 
\begin{eqnarray*}
	N_{\kappa_{l}}(m) = \{m^{'} \in \mathbb{Z}^{n} : m \sim_{\kappa_{l}} m^{'}\}.
\end{eqnarray*}
In addition, $N_{\kappa_{l}}^{\ast}(m)$ is given by $N_{\kappa_{l}}(m) \cup \{m\}$.

\quad A digital image $(M,\kappa_{l})$ is called \textit{digitally $\kappa_{l}-$connected} \cite{Herman:1993} if and only if for any distinct points $m$, $m^{'} \in M$, there exists a subset $\{m_{0}, m_{1}, ..., m_{s}\} \subseteq M$ with the following conditions:
\begin{itemize}
	\item The initial and final points are $m$ and $m^{'}$, respectively, i.e., $m = m_{0}$ and $m^{'} = m_{s}$.
	\item One has $m_{j} \sim_{\kappa_{l}} m_{j+1}$ for each $j = 0, 1, ..., s-1$.
\end{itemize} 

\quad By recalling the definition of totally disconnected space, we can give the following in digital images in parallel with topological spaces: 

\begin{definition}
	A digital image is said to be \textit{totally disconnected} provided that it admits only one-point sets as digitally connected subsets.
\end{definition} 

\quad Assume that $\alpha : (M,\kappa_{l}) \rightarrow (M^{'},\kappa_{s})$ is any map of digital images. Then $\alpha$ is \textit{digitally $(\kappa_{l},\kappa_{s})-$continuous} \cite{Boxer:1999,Ros:1986} if
\begin{eqnarray*}
	M_{1} \ \text{is $\kappa_{l}-$connected in} \ M \ \ \Rightarrow \ \ \alpha(M_{1}) \ \text{is $\kappa_{s}-$connected in} \ M^{'}
\end{eqnarray*}
for any subset $M_{1} \subseteq M$. Proposition 2.5 in \cite{Boxer:1999} emphasizes that when we have two digitally continuous maps, their composition is also a digitally continuous map. A \textit{digital $(\kappa_{l},\kappa_{s})-$isomorphism} is a map $\alpha : (M,\kappa_{l}) \rightarrow (M^{'},\kappa_{s})$ of digital images provided that each of the following satisfies \cite{Boxer2:2006,Han:2005}:
\begin{itemize}
	\item $\alpha$ is bijective and $(\kappa_{l},\kappa_{s})-$continuous.
	\item The inverse of $\alpha$ is $(\kappa_{s},\kappa_{l})-$continuous.
\end{itemize} 

\quad The \textit{digital $n$-sphere} \cite{Boxer:2006}, the discrete version of $S^{n}$ in topological spaces, is presented by the boundary of $I_{n}$. More explicitly, $S^{n}$ in digital images is $[-1,1]_{\mathbb{Z}}^{n+1} \setminus \{(0,\cdots,0)\} \subset \mathbb{Z}^{n+1}$, where $(0,\cdots,0)$ denotes the origin of $\mathbb{Z}^{n+1}$ and sometimes shortly expressed by $0_{n+1}$. 

\quad The \textit{strong adjacency} NP, also known as the \textit{normal product adjacency}, is defined as follows for the Cartesian product created by two digital images.

\begin{definition}\cite{Berge:1976}
	Assume that $(M,\kappa_{l})$ and $(M^{'},\kappa_{s})$ are any digital images (or graphs). Let $m_{1}$, $m_{2} \in M$ and $m^{'}_{1}$, $m^{'}_{2} \in M^{'}$ be any points. Then
	\begin{eqnarray*}
		(m_{1},m^{'}_{1}) \sim_{\text{NP}(\kappa_{l},\kappa_{s})} (m_{2},m^{'}_{2}) \ \ \ \text{in} \ \ \ M \times M^{'}
	\end{eqnarray*}
	 if and only if one of the following satisfies:
	\begin{itemize}
		\item[\textbf{a)}] $m_{1} = m_{2}$ and $m^{'}_{1} \sim_{\kappa_{s}} m^{'}_{2}$, or
		\item[\textbf{b)}] $m_{1} \sim_{\kappa_{l}} m_{2}$ and $m^{'}_{1} = m^{'}_{2}$, or
		\item[\textbf{c)}] $m_{1} \sim_{\kappa_{l}} m_{2}$ and $m^{'}_{1} \sim_{\kappa_{s}} m^{'}_{2}$.
	\end{itemize}
\end{definition}

\quad In \cite{BoxKar:2012}, an important property tells us that a $\kappa_{l}-$adjacency can be differ from a strong adjacency. The NP-adjacency is entirely determined by the adjacencies of the factors. Moreover, one has the fact that shows when the NP-adjacency is equal to the $c_{k}-$adjacency.

\begin{proposition}\cite{BoxKar:2012}
	Assume that $(M,\kappa_{l})$ and $(M^{'},\kappa_{s})$ are any digital images. Then the NP-adjacency and the $\kappa_{l+s}-$adjacency are equal to each other in the Cartesian product $M \times M^{'}$.
\end{proposition}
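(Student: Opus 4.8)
The plan is to translate both adjacency relations into bookkeeping about how many coordinates change, and then compare the two resulting numerical conditions directly. Writing $M \subseteq \mathbb{Z}^{a}$ and $M' \subseteq \mathbb{Z}^{b}$, a point of $M \times M'$ is a vector in $\mathbb{Z}^{a+b}$ whose first $a$ entries come from the $M$-factor and whose last $b$ entries come from the $M'$-factor. For two points $P = (p,p')$ and $Q = (q,q')$ of $M \times M'$ I would introduce $d_{1}$, the number of the first $a$ coordinates at which $p$ and $q$ differ by exactly $1$, and $d_{2}$, the analogous count for the last $b$ coordinates and the pair $p',q'$. The definition of $\kappa_{l}$ then says exactly that $p \sim_{\kappa_{l}} q$ holds precisely when every coordinate of $p$ and $q$ differs by $0$ or $1$ and $1 \le d_{1} \le l$, with the equal case $p=q$ recorded as $d_{1}=0$; the same description with $(s,d_{2})$ governs the $M'$-factor, while $\kappa_{l+s}$ on $\mathbb{Z}^{a+b}$ says that $P \sim_{\kappa_{l+s}} Q$ precisely when every coordinate differs by $0$ or $1$ and $1 \le d_{1}+d_{2} \le l+s$.

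First I would dispatch the inclusion $\mathrm{NP}(\kappa_{l},\kappa_{s}) \Rightarrow \kappa_{l+s}$, which is the routine direction. In each of the three cases a), b), c) of the normal product definition, neither factor moves any coordinate by more than $1$, so the same holds for $P$ and $Q$; moreover the three cases give respectively $d_{1}=0$ with $1 \le d_{2} \le s$, then $1 \le d_{1} \le l$ with $d_{2}=0$, and finally $1 \le d_{1} \le l$ with $1 \le d_{2} \le s$. In all three cases $P \neq Q$ and $d_{1}+d_{2} \le l+s$, so $P \sim_{\kappa_{l+s}} Q$ follows immediately.

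The reverse inclusion $\kappa_{l+s} \Rightarrow \mathrm{NP}(\kappa_{l},\kappa_{s})$ is where the real content sits, and I expect it to be the main obstacle. Starting from $P \sim_{\kappa_{l+s}} Q$ I obtain that every coordinate moves by at most $1$ and that $1 \le d_{1}+d_{2} \le l+s$, and I want to split this single global bound into the two factorwise bounds $d_{1} \le l$ and $d_{2} \le s$ that cases a)--c) demand. The difficulty is that a bound on the sum $d_{1}+d_{2}$ does not by itself prevent $d_{1}$ from exceeding $l$ while $d_{2}$ falls short of $s$; this is exactly the point at which one must use that each factor carries the top adjacency of its ambient lattice, that is, $a=l$ and $b=s$, so that $d_{1} \le a = l$ and $d_{2} \le b = s$ hold automatically, since a point cannot differ from another in more coordinates than there are. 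Once these factorwise bounds are secured, I would finish with a short case analysis on which of $d_{1},d_{2}$ vanishes: if $d_{1}=0$ then $p=q$ and $1 \le d_{2} \le s$ yields case a), if $d_{2}=0$ then symmetrically case b), and if both are positive then $p \sim_{\kappa_{l}} q$ together with $p' \sim_{\kappa_{s}} q'$ yields case c). Combining the two inclusions gives the claimed equality of the two adjacencies on $M \times M'$.
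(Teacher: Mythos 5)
Your argument is correct, but there is nothing in the paper to compare it against: this proposition is quoted from \cite{BoxKar:2012} and the paper supplies no proof of it. Your coordinate-counting proof is the standard one, and its most valuable feature is that you isolated exactly the hypothesis that makes the hard direction work: the reverse inclusion $\kappa_{l+s} \Rightarrow \mathrm{NP}(\kappa_{l},\kappa_{s})$ needs the factorwise bounds $d_{1} \le l$ and $d_{2} \le s$ separately, and these only come for free when $l = a$ and $s = b$, i.e., when each factor carries the maximal adjacency of its ambient lattice. This is not a cosmetic point. As literally stated --- ``any digital images $(M,\kappa_{l})$ and $(M^{'},\kappa_{s})$'' --- the proposition is false: take $M = \mathbb{Z}^{2}$ with the $\kappa_{1} = 4$-adjacency and $M^{'} = \mathbb{Z}$ with the $2$-adjacency; then $P = ((0,0),0)$ and $Q = ((1,1),0)$ are $\kappa_{2}$-adjacent (that is, $18$-adjacent) in $\mathbb{Z}^{3}$, but $(0,0)$ and $(1,1)$ are neither equal nor $4$-adjacent, so $P$ and $Q$ are not $\mathrm{NP}(\kappa_{1},\kappa_{1})$-adjacent. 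The theorem actually proved in \cite{BoxKar:2012} carries precisely the restriction you reintroduced (each factor uses the top adjacency $c_{m}$ of its ambient $\mathbb{Z}^{m}$), so your proof proves the correct statement; the only improvement to make is to promote ``$a = l$ and $b = s$'' from something one quietly ``must use'' in mid-proof to an explicit standing hypothesis, since without it no proof exists.
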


\quad A digital interval \cite{Boxer:2006}, the discrete version of a closed interval in topological spaces, is given by $[m,m^{'}]_{\mathbb{Z}} = \{r \in \mathbb{Z} \ : \ m \leq r \leq m^{'}\}$. Assume that $[0,j]_{\mathbb{Z}}$ is any digital interval with the $2-$adjacency and $f$, $g : (M,\kappa_{l}) \rightarrow (M^{'},\kappa_{s})$ are two $(\kappa_{l},\kappa_{s})-$continuous maps. Then $f$ and $g$ are called \textit{digitally $(\kappa_{l},\kappa_{s})-$homotopic} \cite{Boxer:1999} if there is a digitally continuous map $H : M \times [0,j]_{\mathbb{Z}} \rightarrow M^{'}$ satisfying $H(m,0) = f(x)$ and $H(m,j) = g(x)$ for all $m \in M$ with the properties that, for any fixed $m \in M$, the map $$H_{m} : [0,j]_{\mathbb{Z}} \rightarrow M^{'}$$ defined by $H_{m}(t) = H(m,t)$ is digitally continuous for all $t$, and for any fixed \linebreak$t \in [0,j]_{\mathbb{Z}}$, the map $$H_{t} : M \rightarrow M^{'}$$ defined by $H_{t}(m) = K(m,t)$ is digitally continuous for all $x$. Moreover, $n$ is the number of steps of $H$ and $H$ is called a \textit{digital homotopy} between $f$ and $g$ in $j$ steps. $f \simeq_{\kappa_{l},\kappa_{s}} g$ indicates that $f$ and $g$ are digitally homotopic to each other. Furthermore, a digital image $(M,\kappa_{l})$ is \textit{digitally $\kappa_{l}-$contractible} \cite{Boxer:1999} if $1_{M} \simeq_{\kappa_{l},\kappa_{l}} s$ for some constant map $s : M \rightarrow M$.

\quad Assume that $M^{'}$ is a $\kappa_{s}-$connected digital image, $m_{0} \in M^{'}$ a basepoint, and $f : (M,\kappa_{l}) \rightarrow (M^{'},\kappa_{s})$ a $(\kappa_{l},\kappa_{s})-$continuous map. Then $f$ is a \textit{digital fiber bundle with digital fiber $(N,\kappa_{v})$} provided that the following are satisfied \cite{KaracaVergili:2011}:

\begin{itemize}
	\item $f^{-1}(m_{0}) = N$ and $f$ is surjective.
	\item Let $m^{'} \in M^{'}$ any point. Then there is a $\kappa_{s}-$connected subset $W \subseteq \mathbb{Z}$ with $m^{'} \in W$ and an isomorphism $\varphi : f^{-1}(W) \rightarrow W \times N$ with $\pi_{1} \circ \varphi = f$. Here, $\pi_{1} : (W \times N,\kappa_{*}) \rightarrow (W,\kappa_{s})$ is the first projection, where $\kappa_{*}$ is NP$(\kappa_{s},\kappa_{1})$.
\end{itemize}

\quad The digital Euler characteristic, the discrete version of Euler characteristic in topological spaces, is defined in \cite{Han:2007} but we use the improved version of this notion \cite{KaracaBoxerOztel:2011}: Let $(M,\kappa_{l})$ be a digital image. A set of $i+1$ unique elements of a digital image $(M,\kappa_{l})$, each pair of which is $\kappa_{l}-$adjacent, constitutes a $\kappa_{l}-$simplex in $M$ of dimension $i$. Assume now that $\alpha_{r}$ is the number of unique $r-$dimensional $\kappa_{l}-$simplices in $X$, and $n$ is the greatest integer $i$ such that $M$ has a $\kappa_{l}-$simplex of dimension $i$. Then the \textit{digital Euler characteristic} \cite{KaracaBoxerOztel:2011} of a digital image $(M,\kappa_{l})$, denoted by $\chi(M,\kappa_{l})$ or shortly $\chi(M)$, is defined by the number $\displaystyle \sum_{r=0}^{n}(-1)^{r}\alpha_{r}$.

\begin{proposition}\cite{Han:2007,KaracaBoxerOztel:2011}
	If $(M,\kappa_{l})$ and $(M^{'},\kappa_{s})$ are digitally isomorphic digital images, then their digital Euler characteristics are the same, i.e., $$\chi(M,\kappa_{l}) = \chi(M^{'},\kappa_{s}).$$
\end{proposition}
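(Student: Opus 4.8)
The plan is to prove invariance by exhibiting, for each dimension $i$, a bijection between the $i$-dimensional $\kappa_{l}$-simplices of $M$ and the $i$-dimensional $\kappa_{s}$-simplices of $M'$, induced by the given isomorphism $\phi : (M,\kappa_{l}) \rightarrow (M',\kappa_{s})$. Since the digital Euler characteristic is an alternating sum of simplex counts $\alpha_{r}$, such dimension-preserving bijections immediately force the two characteristics to agree. The central preliminary step, and the one I expect to be the crux, is to show that a digital isomorphism preserves the adjacency relation in \emph{both} directions, that is, $m_{1} \sim_{\kappa_{l}} m_{2}$ if and only if $\phi(m_{1}) \sim_{\kappa_{s}} \phi(m_{2})$ for distinct $m_{1}, m_{2} \in M$.

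To establish this biconditional, I would take distinct points $m_{1}, m_{2} \in M$ with $m_{1} \sim_{\kappa_{l}} m_{2}$, so that the two-point set $\{m_{1}, m_{2}\}$ is $\kappa_{l}$-connected. By $(\kappa_{l},\kappa_{s})$-continuity of $\phi$, the image $\{\phi(m_{1}), \phi(m_{2})\}$ is $\kappa_{s}$-connected; since $\phi$ is injective these are two distinct points, and a two-point set is connected precisely when its points are adjacent, whence $\phi(m_{1}) \sim_{\kappa_{s}} \phi(m_{2})$. Applying the identical argument to the $(\kappa_{s},\kappa_{l})$-continuous inverse $\phi^{-1}$ yields the reverse implication. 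This is exactly where both bijectivity and continuity of the inverse are needed: one-sided continuity alone would give only one implication, so the full isomorphism hypothesis is essential here.

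With the adjacency equivalence in hand, the simplex correspondence is routine. A $\kappa_{l}$-simplex of dimension $i$ is a set of $i+1$ distinct points that are pairwise $\kappa_{l}$-adjacent; its image under $\phi$ consists of $i+1$ distinct points (by injectivity) that are pairwise $\kappa_{s}$-adjacent (by the forward implication), hence is an $i$-dimensional $\kappa_{s}$-simplex in $M'$. Symmetrically, $\phi^{-1}$ carries each $i$-dimensional $\kappa_{s}$-simplex of $M'$ to an $i$-dimensional $\kappa_{l}$-simplex of $M$, and these assignments are mutually inverse. Thus $\phi$ induces a bijection on $i$-simplices for every $i$, giving $\alpha_{r}(M,\kappa_{l}) = \alpha_{r}(M',\kappa_{s})$ for all $r$, and in particular the top simplex dimensions coincide, $n = n'$.

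Finally I would conclude by substituting into the definition: $\chi(M,\kappa_{l}) = \sum_{r=0}^{n} (-1)^{r} \alpha_{r}(M,\kappa_{l}) = \sum_{r=0}^{n'} (-1)^{r} \alpha_{r}(M',\kappa_{s}) = \chi(M',\kappa_{s})$. The only genuinely delicate point in the whole argument is the bi-directional adjacency preservation of the second paragraph; once that biconditional is secured, the rest is bookkeeping on finite sets of simplices.
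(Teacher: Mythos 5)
Your proof is correct. The paper itself does not prove this proposition --- it is quoted as a preliminary result with citations to Han and to Karaca--Boxer--\"{O}ztel --- and your argument (two-point connected sets give adjacency preservation in both directions via continuity of $\phi$ and $\phi^{-1}$, hence dimension-preserving bijections on $\kappa$-simplices, hence equality of the alternating sums $\sum_{r}(-1)^{r}\alpha_{r}$) is precisely the standard argument underlying the cited result, with the one genuinely delicate point, the biconditional on adjacency, correctly identified and correctly handled.
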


\section{Digital Manifolds and Some Counterexamples}
\label{sec:2}

\quad A topological $n-$manifold $M$ includes three basic properties as follows.
\begin{itemize}
	\item $M$ is Hausdorff.
	\item $M$ is second-countable.
	\item $M$ admits a local homeomorphism, i.e., every point $m \in M$ has a neighborhood $W$ such that $W$ is homeomorphic to an open subset of $\mathbb{R}^{n}$.  
\end{itemize} 

\quad When defining the concept of a manifold in digital topology is desired, it is necessary to examine these three properties separately on digital images. The properties of being a Hausdorff space and being a second-countable space in digital images do not tell us new things mathematically. We need to focus more on the third property.

\quad The discrete topology is a Hausdorff space since every subset is open. Indeed, any two distinct points can be separated by open sets, each containing only that point. Recall that no topology exists on digital images, that is, every subset can be assumed to be open. Therefore, digital images, the discrete construction of topological spaces over $\mathbb{Z}^{n}$, always tend to have the Hausdorff property.

\quad Let $(M,\kappa)$ be a digital image. Then a collection $\beta$ of subsets of $M$ is called a digital base for the digital image $M$ if and only if every subset $N \subseteq M$ can be written as the union of elements of $\beta$. In addition, a digital image $M$ is called second-countable if $M$ has a countable digital basis. Recall that the discrete topological space $(\mathbb{R},\tau_{d})$ is not a second-countable space even if $\beta = \{m \ : \ m \in \mathbb{R}\}$ is a basis for $\mathbb{R}$. Because $\mathbb{R}$ is not countable. On the other hand, the collection $\{m \ : \ m \in \mathbb{Z}\}$ forms a countable digital basis for the discrete version $\mathbb{Z}$ of real numbers. Thus, digital images, the discrete construction of topological spaces over $\mathbb{Z}^{n}$, always tend to have the second-countable property.

\begin{definition} \label{def1}
	A digital image $(M,\kappa)$ is called a digital $n-$manifold provided that for a minimum non-negative integer $n$, each point $m \in X$ has a digital neighborhood $N_{\kappa}(m)$ that is digitally isomorphic to $N_{\lambda}(s)$ of at least one point $s$ in $\mathbb{Z}^{n}$.  
\end{definition} 


\quad A digital isomorphism $N_{\kappa}(m) \rightarrow N_{\lambda}(s)$ in Definition \ref{def1} is called a digital chart. The inverse of this isomorphism $N_{\lambda}(m) \rightarrow N_{\kappa}(s)$ is called a digital local parametrization of $M$ about $m$. A set of digital charts is said to be a digital atlas provided that $M$ can be covered by their domains.

\begin{example}
	\begin{itemize}
		\item The digital $0-$sphere $S^{0} = \{-1,1\}$ is a digital $0-$manifold since $N_{\kappa}(-1) = N_{\kappa}(1) = \emptyset$ for $-1$ and $1 \in S^0$.   
		
		\item 
		$\mathbb{Z}$ is a digital $1-$manifold: Assume that $m$ is any integer. Then we have $N_{2}(m) = \{m-1,m+1\}$. Therefore, $N_{2}(m)$ is digitally isomorphic to $N_{2}(s) = \{s-1,s+1\}$ for any $s$ different from $m$ in $\mathbb{Z}$ by the map \linebreak$\gamma : N_{2}(m) \rightarrow N_{2}(s)$, $\gamma(m-1) = s-1$ and $\gamma(m+1) = s+1$.
		
		\item 
		$S^{1}$ is a digital $1-$manifold: Let $S^{1}$ have $\kappa_{1} = 4$ or $\kappa_{2} = 8-$adjacency. Each point $m \in S^{1}$ has two adjacent points, say $m_{1}$ and $m_{2}$. Therefore, there is a point $s$ in $\mathbb{Z}$ with the property that $P = N_{\kappa_{l}}(m) = \{m_{1},m_{2}\}$ is digitally isomorphic to $N_{2}(s) = \{s-1,s+1\}$ for $l = 1,2$.
		
	\end{itemize}
\end{example}

\begin{example}\label{exm2}
	It is not possible to say that any subset of a digital $n-$manifold is a digital $n-$manifold. For example $\mathbb{Z}^{2}$ is a digital $2-$manifold but $$M = \mathbb{Z} \times \{0\} \cup \{0\} \times \mathbb{Z} \subseteq \mathbb{Z}^{2}$$ is not a digital manifold of any dimension: Suppose that $M$ is a digital $2-$manifold with the $4-$adjacency. Then, for any point $m \in M - \{(0,0)\}$, we have a digital isomorphism $$\gamma : N_{4}(m) \rightarrow N_{4}(s),$$ where $s \in \mathbb{Z}^{2}$ is any point. However, this is a contradiction because $N_{4}(m)$ has exactly two totally disconnected points and $N_{4}(s)$ has exactly four totally disconnected points, i.e., a bijective map cannot be constructed. Similarly, by assuming that $M$ is a digital $1-$manifold, we have a digital isomorphism $$\gamma^{'} : N_{4}((0,0)) \rightarrow N_{2}(s^{'})$$ for any point $s^{'} \in \mathbb{Z}$. This is again a contradiction because $N_{4}((0,0))$ has four totally disconnected points $(1,0),(0,1),(-1,0)$, and $(0,-1)$ whereas $N_{2}(s^{'})$ has two totally disconnected points $s^{'}-1$ and $s^{'}+1$. Moreover, $M$ cannot be a $1-$manifold. A similar way can be followed when $M$ has the $8-$adjacency. Indeed, $(2,0)$ has exactly two $8-$adjacent points ($(1,0)$ and $(3,0)$) but $(1,0)$ has exactly four $8-$adjacent points ($(2,0)$, $(0,0)$, $(-1,0)$, and $(1,0)$).
\end{example}


\quad The union of two digital manifolds need not be a digital manifold again. For instance, let $M_{1}$ and $M_{2}$ be two digital images 
\begin{eqnarray*}
	&&\{(m_{1},0) : 0 \leq m_{1} \leq 4 \} \cup \{(4,m_{1}) : 1 \leq m_{1} \leq 4\}\\
	&&\cup \ \{(m_{1},4) : 0 \leq m_{1} \leq 3\} \cup \{(0,m_{1}) : 1 \leq m_{1} \leq 3\}
\end{eqnarray*}
and $$ \{m_{1}, m_{2} \in \mathbb{Z} : m_{1} = m_{2} \ \text{and} \ 0 \leq m_{1},m_{2} \leq 4\}$$ with the $4-$adjacency, respectively. Even if $M_{1}$ is a digital $1-$manifold and $M_{2}$ is a digital $0-$manifold, $M_{1} \cup M_{2}$ is not a digital manifold. Indeed, whenever the point $(0,4)$ behaves like an element of a digital $1-$manifold ($N_{4}((0,4))$ has two totally disconnected points $(0,3)$ and $(1,4)$) but $(2,2)$ behaves like an element of a digital $0-$manifold ($N_{4}((2,2)) = \emptyset$).

\quad Another difference from a topological space is in the classification of some manifolds in digital images. For instance, a connected $1-$manifold can be homeomorphic to one of the following manifolds: a circle, a closed interval $[0,1]$, the real numbers, or the half-line. In the discrete version, one can intuitively think that a digitally connected $1-$manifold can be digitally isomorphic to a digital interval or a digital $1-$sphere since the discretizations of the closed interval, $\mathbb{R}$, and the half-line are all a digital interval. However, this is false. Indeed, a digital $1-$manifold $M = \{(0,0),(0,1),(1,0),(1,1)\}$ with the $4-$adjacency is not digitally isomorphic to these two possible digital images (see Figure \ref{fig:1}). 

\begin{figure}[h]
	\centering
	\includegraphics[width=0.40\textwidth]{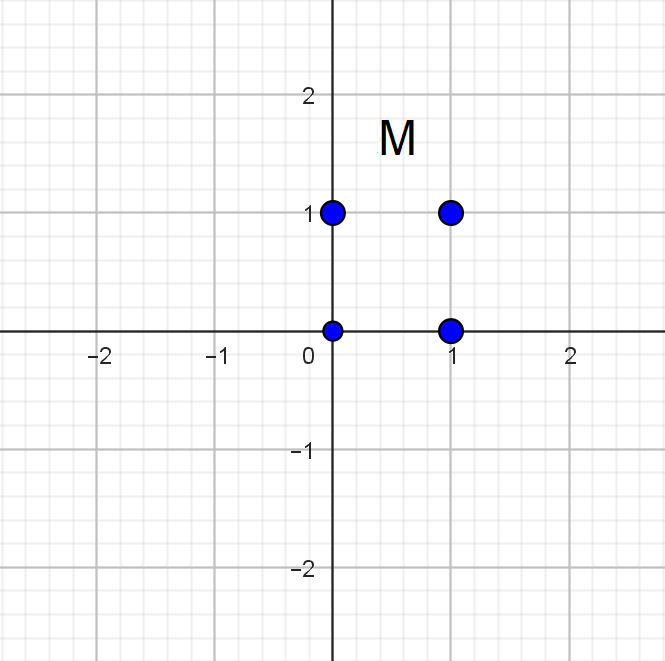}
	\caption{As an example of a digital $1-$manifold, the digital image $M = \{(0,0),(0,1),(1,0),(1,1)\}$ with the $4-$adjacency.}
	\label{fig:1}
\end{figure}

\begin{definition}\label{def2}
	A digital image $(M,\kappa)$ is called a digital $n-$manifold with boundary provided that for a minimum non-negative integer $n$, each point $m \in M$ has a digital neighborhood $N_{\kappa}(m)$ that is digitally isomorphic to $N_{\lambda}(s)$ of at least one point $s$ in a digital image $D_{+}^{n} = \{(m_{1},\cdots,m_{n}) \in \mathbb{Z}^{n} : m_{j} \geq 0 \ \text{for all} \ j = 1,\cdots,n\}$ or $\mathbb{Z}^{n}$. 
\end{definition} 

\quad 
The boundary $\partial M$ of a digital $n-$manifold $(M,\kappa)$ is defined by a digital image having all points mapped to $m_{j} = 0$ by a digital chart for at least one $j \in \{1,\cdots,n\}$ and the interior int$(M)$ of a digital $n-$manifold $(M,\kappa)$ is defined by a digital image having all points mapped to $m_{j} > 0$ by some digital charts for all $j$, i.e.,
\begin{eqnarray*}
 M = \text{int}(M) \cup \partial M.
\end{eqnarray*}

\quad If $\partial M = \emptyset$, then $(M,\kappa)$ is a manifold (without boundary), namely that Definition \ref{def2} corresponds to Definition \ref{def1}.

\begin{example}\label{exm1}
	Let $M = [1,3]_{\mathbb{Z}} \times [1,3]_{\mathbb{Z}}$ be a digital image in $\mathbb{Z}^{2}$ with the $4-$adjacency. The point $(2,2) \in M$ admits a digital isomorphism $$N_{4}((2,2)) \rightarrow N_{4}(0,0)$$ for the point $(0,0) \in D_{+}^{2}$. Some points such as $(2,1)$, $(3,2)$, $(2,3)$, and $(1,2)$ admit a digital isomorphism $$N_{4}(-) \rightarrow N_{4}((1,0))$$ for the point $(1,0) \in D_{+}^{2}$. The points $(1,1)$, $(3,1)$, $(3,3)$, and $(1,3)$ admit a digital isomorphism $$N_{4}(-) \rightarrow N_{4}(5,5)$$ for the point $(5,5) \in D_{+}^{2}$. This shows that $(M,4)$ is a digital $2-$manifold with boundary. Moreover, we observe that $(2,2)$ is the only interior point of $M$ and $\partial M = \{m \in M : m \neq (2,2)\}$ is the boundary set of $M$. It follows that int$(m)$ is a digital $0-$manifold and $\partial M$ is a digital $1-$manifold.
\end{example}

\quad In topological spaces, if $M$ is an $n-$manifold with boundary, then int$(M)$ is an $n-$manifold, too. However, this is not always true for digital images. Example \ref{exm1} presents a counterexample of this in digital images. 

\begin{proposition}
	A digital interval is a digital $1-$manifold with two boundaries.
\end{proposition}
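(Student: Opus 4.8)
The plan is to fix a nondegenerate digital interval $M = [a,b]_{\mathbb{Z}}$ (so $a < b$) equipped with the $2$-adjacency $\kappa_1$, and to check Definition \ref{def2} directly by computing the digital neighborhood of every point of $M$ and matching it with a model neighborhood in $\mathbb{Z}$ or in $D_+^1 = \{m_1 \in \mathbb{Z} : m_1 \geq 0\}$. The one convention I would flag at the outset, exactly as in the $S^0$ example, is that for a manifold the neighborhood $N_{\kappa_1}(m)$ is read inside the digital image itself, that is $\{m' \in M : m \sim_{\kappa_1} m'\}$, and likewise for the model images $\mathbb{Z}$ and $D_+^1$.

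First I would split $M$ into its two endpoints and its interior. For an interior point $r$ with $a < r < b$ one has $N_{\kappa_1}(r) = \{r-1, r+1\}$, a pair of non-adjacent points (since $|(r-1)-(r+1)| = 2$), which is digitally isomorphic to the model $N_{\kappa_1}(s) = \{s-1, s+1\}$ of any interior $s \in \mathbb{Z}$ through the evident order-preserving bijection. For the endpoints one finds $N_{\kappa_1}(a) = \{a+1\}$ and $N_{\kappa_1}(b) = \{b-1\}$, each a single point, since $a-1$ and $b+1$ fall outside $M$; in $D_+^1$ the point $0$ has the one-point neighborhood $N(0) = \{1\}$, so each endpoint neighborhood is isomorphic to $N(0)$ via the trivial one-point bijection. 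This already exhibits $M$ as satisfying the defining condition of Definition \ref{def2} with $n = 1$, and I would also note that $n = 1$ is minimal: a digital $0$-manifold needs every neighborhood isomorphic to one in $\mathbb{Z}^0$ or $D_+^0$, which are single points with empty neighborhood, whereas every point of $M$ has a nonempty neighborhood.

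Finally, for the boundary I would invoke the definition of $\partial M$ as the set of points sent to a coordinate $m_j = 0$ by a digital chart. The key is that in $\mathbb{Z}$ and in $D_+^1 \setminus \{0\}$ every model neighborhood has two points, while $N(0) = \{1\}$ is the unique one-point model; hence the endpoints $a$ and $b$, whose neighborhoods are single points, are forced to chart onto $0 \in D_+^1$ and so lie in $\partial M$, while every interior point charts into the two-point interior and lies in $\text{int}(M)$. This yields $\partial M = \{a,b\}$ and $\text{int}(M) = \{r : a < r < b\}$, i.e.\ exactly the two asserted boundaries. I expect the only delicate point to be the bookkeeping of this within-image convention together with the asymmetry of $D_+^1$ at its single boundary point $0$; once that is fixed, all the isomorphisms in play are trivial bijections of one- or two-point totally disconnected sets.
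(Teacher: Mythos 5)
Your proof is correct and follows essentially the same route as the paper's: split $[a,b]_{\mathbb{Z}}$ into the two endpoints (one-point neighborhoods, charted to $N(0)$ in $D_+^1$) and the interior points (two-point neighborhoods, charted to $N(s)$ for $s \neq 0$), then read off $\partial M = \{a,b\}$. Your additions—making the within-image neighborhood convention explicit, checking that $n=1$ is minimal as Definition \ref{def2} requires, and noting that the endpoints are \emph{forced} into $\partial M$ since only $0 \in D_+^1$ has a one-point neighborhood—are refinements the paper leaves implicit, not a different argument.
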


\begin{proof}
	Let $[a,b]_{\mathbb{Z}}$ be any digital interval. Then $N_{2}(a)$ and $N_{2}(b)$ have exactly one totally disconnected points $\{a+1\}$ and $\{b-1\}$, respectively. For any point $0 \in D_{+}^{1}$, we have two digital isomorphisms $$N_{2}(a) \rightarrow N_{2}(0)$$ with $a+1 \mapsto 1$ and $$N_{2}(b) \rightarrow N_{2}(0)$$ with $b-1 \mapsto 1$. It follows that $a$, $b \in \partial M$. Moreover, if $[a,b]_{\mathbb{Z}}$ has any point $c$ such that $c \neq a$ and $c \neq b$, then we have a digital isomorphism $$N_{2}(c) \rightarrow N_{2}(s)$$ with $c-1 \mapsto s-1$ and $c+1 \mapsto s+1$ for any point $s \in D_{+}^{1} - \{0\}$ (namely that \linebreak$c \in$ int$([a,b]_{\mathbb{Z}})$). Therefore, $[a,b]_{\mathbb{Z}}$ is a digital $1-$manifold with two boundaries $a$ and $b$.
\end{proof}

\begin{proposition}\label{teo1}
	The digital $2-$sphere $S^{2}$ is a digital $2-$manifold with boundaries with the $6-$adjacency, and a digital $2-$manifold with the $18$ and $26-$adjacencies, where $D_{+}^{2}$ has the $4-$adjacency.
\end{proposition}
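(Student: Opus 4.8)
The plan is to exploit the symmetry of the sphere to cut the verification down to a handful of cases. The twenty-six points of $S^{2} = [-1,1]_{\mathbb{Z}}^{3} \setminus \{0_{3}\}$ split into three orbits under the symmetry group of the cube $[-1,1]_{\mathbb{Z}}^{3}$ (generated by permutations and sign changes of the coordinates): the six \emph{face points} with a single nonzero coordinate, e.g. $(1,0,0)$; the twelve \emph{edge points} with two nonzero coordinates, e.g. $(1,1,0)$; and the eight \emph{corner points} with three nonzero coordinates, e.g. $(1,1,1)$. Each such symmetry is a bijection of $\mathbb{Z}^{3}$ preserving every $\kappa_{l}$ and carrying $S^{2}$ to itself, hence restricts to a digital isomorphism of $(S^{2},\kappa)$ sending any point of an orbit to any other; so it suffices to exhibit a chart in the sense of Definition \ref{def1} or Definition \ref{def2} at one representative of each orbit, and to do this separately for $\kappa \in \{6,18,26\}$.

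I would begin with the $6$-adjacency, where the induced neighbourhoods are easiest to read off. Computing $N_{6}(m) \cap S^{2}$ directly, a face point and an edge point each acquire exactly four neighbours in $S^{2}$, and one checks that no two are $6$-adjacent; hence each such neighbourhood is four totally disconnected points, matching $N_{4}(s)$ for a point $s \in \mathbb{Z}^{2}$ (an interior chart). A corner point, however, has only three neighbours in $S^{2}$, again pairwise non-adjacent, and three totally disconnected points match $N_{4}((1,0))$ computed inside $D_{+}^{2}$ with the $4$-adjacency, whose elements $(0,0)$, $(2,0)$, $(1,1)$ are pairwise non-adjacent. Thus each corner is a boundary point, and $S^{2}$ is a digital $2$-manifold with boundary under $\kappa_{1}$, with $\partial S^{2}$ the set of eight corners.

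For the $18$- and $26$-adjacencies I would follow the same recipe: at each of the three representatives compute the vertex set of $N_{\kappa}(m) \cap S^{2}$ together with the list of $\kappa$-adjacent pairs inside it, and then search for a digital isomorphism onto a model neighbourhood $N_{\lambda}(s)$ with $s \in \mathbb{Z}^{2}$ and $\lambda \in \{4,8\}$. The natural candidate for such an isomorphism is a projection onto two suitably chosen coordinate directions, after which one must confirm that the resulting bijection sends $\kappa$-adjacency to $\lambda$-adjacency in both directions and that no coordinate is forced to $0$ in the image, since a vanishing coordinate under the chart is precisely what would manufacture boundary and contradict the ``without boundary'' claim.

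The main obstacle is this final matching step for $\kappa \in \{18,26\}$. In contrast to the totally disconnected $6$-adjacency neighbourhoods, here the induced neighbourhood graphs are connected, and their orders grow with the adjacency and differ across the three orbit types, whereas the planar models $N_{4}(s)$ and $N_{8}(s)$ offer only four and eight points respectively. The crux of the argument is therefore to pin down, orbit by orbit, a planar model point whose neighbourhood graph is genuinely isomorphic (as an adjacency graph) to the computed neighbourhood, and to verify that the bijection preserves adjacency in both directions. This bookkeeping — comparing orders, degree sequences, and adjacency patterns of the two induced graphs — is where I expect essentially all the difficulty to reside, and it is the step I would scrutinise most carefully before concluding that the $18$- and $26$-adjacency spheres carry no boundary.
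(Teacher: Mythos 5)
Your handling of the $6$-adjacency case is correct and is essentially the paper's own argument: the paper likewise splits $S^{2}$ into the points whose induced neighbourhoods consist of three, respectively four, totally disconnected points and maps them onto $N_{4}((1,0))$ and onto an interior model point ($(2,2)$ in the paper, a generic $s\in\mathbb{Z}^{2}$ in your write-up), arriving at the same boundary set of eight corners. The orbit reduction under the symmetries of the cube is a clean organizational improvement. The genuine gap is exactly where you located it: for $\kappa\in\{18,26\}$ you never exhibit the required charts, you only describe a search for them. By the definitions in the paper, proving that $(S^{2},18)$ and $(S^{2},26)$ are digital $2$-manifolds requires, at \emph{every} point $m$, an explicit digital isomorphism from $N_{\kappa}(m)\cap S^{2}$ onto some model neighbourhood $N_{\lambda}(s)$, and your proposal stops short of producing a single one.

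Moreover, this gap cannot be closed, because your own partial observations already defeat the claim. At a corner point one computes $N_{18}((1,1,1))\cap S^{2}=\{(0,1,1),(1,0,1),(1,1,0),(0,0,1),(0,1,0),(1,0,0)\}$: six points, and connected (they contain the $6$-cycle $(0,1,1)$--$(0,0,1)$--$(1,0,1)$--$(1,0,0)$--$(1,1,0)$--$(0,1,0)$--$(0,1,1)$); the same six points form $N_{26}((1,1,1))\cap S^{2}$. Every admissible model neighbourhood, however, has two, three, or four points (any $s\in D_{+}^{2}$ or $s\in\mathbb{Z}^{2}$ with the $4$-adjacency) or eight points ($s\in\mathbb{Z}^{2}$ with the $8$-adjacency). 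Since a digital isomorphism is in particular a bijection, no chart exists at a corner point, so the $18$- and $26$-adjacency assertions of the proposition are false as stated. Be aware that the paper's own proof of these cases rests on the claim that $N_{18}(m)$ and $N_{26}(m)$ have ``exactly four totally disconnected points'' for every $m\in S^{2}$; this is arithmetically wrong---the induced neighbourhoods have $6$, $8$, or $12$ points under the $18$-adjacency and $6$, $10$, or $16$ under the $26$-adjacency, depending on the orbit, and they are connected, exactly as you suspected. So the step you singled out for scrutiny is indeed the fatal one: carrying out your program honestly refutes the statement rather than proves it.
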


\begin{proof}
    First, consider the $6-$adjacency. For any point $m \in S^{2}$, $N_{6}(m)$ has three or four totally disconnected points (for example $N_{6}(1,1,1)$ has three totally disconnected points and $N_{6}(0,0,1)$ has four totally disconnected points). Then, for any points $(1,0)$ and $(2,2)$ in $D_{+}^{2}$, there exist two digital isomorphisms
    \begin{eqnarray*}
    N_{6}(m_{1}) \rightarrow N_{4}((1,0)) \ \ \text{and} \ \ N_{6}(m_{2}) \rightarrow N_{4}((2,2)),
    \end{eqnarray*}
    where $m_{1}$ and $m_{2}$ can be considered any points of $S^{2}$ with the properties that $N_{6}(m_{1})$ has three totally disconnected points and $N_{6}(m_{2})$ has four totally disconnected points, respectively. Thus, $S^{2}$ is a digital $2-$manifold with boundaries. Moreover, $\partial S^{2} = \{(x,y,z) : x,y,z \in \{-1,1\}\}$ and int$(S^{2}) = S^{2} - \partial S^{2}$. Assume now that $S^{2}$ has the $18-$adjacency. For each point $m_{3} \in S^{2}$, $N_{18}(m_{3})$ has exactly four totally disconnected points. It follows that $N_{18}(m_{3}) \rightarrow N_{4}(s)$ is a digital isomorphism for any point $s$ in $\mathbb{Z}^{2}$. Furthermore, if we consider the $26-$adjacency on $S^{2}$, then each point $m_{4} \in S^{2}$ admits that $N_{26}(m_{4})$ has four totally disconnected points, too. Thus, by applying a similar process, it is obtained that $S^{2}$ with the $26-$adjacency is again a digital $2-$manifold.
\end{proof}

\quad In topological spaces, if $M$ is an $n-$manifold with boundary, then $\partial M$ is an $(n-1)-$manifold. However, this is not always true for digital images. Proposition \ref{teo1} presents a counterexample of this in digital images because $(S^{2},6)$ is a digital $2-$manifold with boundary and $\partial S^{2}$ is a digital $0-$manifold, not a digital $1-$manifold, with the $6-$adjacency. 

\begin{proposition}\label{teo2}
	The digital $2-$sphere $S^{2}$ is not a digital $2-$manifold, where $D_{+}^{2}$ has the $8-$adjacency.
\end{proposition}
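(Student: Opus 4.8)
The plan is to exhibit, for every admissible adjacency on $S^{2}$, a point whose digital neighbourhood cannot be digitally isomorphic to any model neighbourhood $N_{8}(s)$, so that the local condition of Definition \ref{def1} fails. Two invariants preserved by any digital isomorphism will carry the argument: the cardinality of a neighbourhood and whether it is totally disconnected. First I would record the one structural fact that makes the $8$-adjacency behave differently from the $4$-adjacency used in Proposition \ref{teo1}. For every $s \in \mathbb{Z}^{2}$ the set $N_{8}(s)$ consists of the eight lattice points surrounding $s$, and cyclically consecutive points among them are $8$-adjacent, so $N_{8}(s)$ is a \emph{connected} digital image with exactly eight points. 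This is in sharp contrast to $N_{4}(s)$, which is totally disconnected with four points, and it is precisely this connectivity that will obstruct the required isomorphisms.

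Next I would dispose of the $6$-adjacency on $S^{2}$. As already observed in the proof of Proposition \ref{teo1}, every $N_{6}(m)$ is totally disconnected, with three points at a corner such as $(1,1,1)$ and four points at a face centre such as $(0,0,1)$. Since a digital isomorphism preserves connectivity, a totally disconnected image can never be isomorphic to the connected image $N_{8}(s)$; the cardinalities ($3$ or $4$ against $8$) already disagree as well. Hence the local condition fails at every point and $S^{2}$ is not a digital $2$-manifold under the $6$-adjacency.

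For the $18$- and $26$-adjacencies I would argue by cardinality at a corner. Taking $m = (1,1,1)$ and intersecting the ambient neighbourhood with $S^{2}$, one checks that $N_{18}(m)$ and $N_{26}(m)$ each contain exactly six points, namely the three face-neighbours $(0,1,1),(1,0,1),(1,1,0)$ together with $(0,0,1),(0,1,0),(1,0,0)$ (every other lattice candidate either leaves $[-1,1]_{\mathbb{Z}}^{3}$ or coincides with the deleted origin $0_{3}$). A digital isomorphism is in particular a bijection, so a six-point neighbourhood cannot be isomorphic to the eight-point set $N_{8}(s)$ for any $s$. Again the local condition fails, and $S^{2}$ is not a digital $2$-manifold under either adjacency.

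The step I expect to be the main obstacle is the careful analysis of $N_{8}(s)$ itself: because it is no longer totally disconnected, I must verify its exact cardinality and connectivity type and make sure that the pair of invariants (cardinality together with the totally disconnected/connected dichotomy) genuinely rules out an isomorphism in every case. If one also wishes to exclude even a digital $2$-manifold-with-boundary structure (Definition \ref{def2}), this must cover the boundary points of $D_{+}^{2}$, whose $8$-neighbourhoods inside $D_{+}^{2}$ are connected sets of three or five points, so that the six-point corner neighbourhood of $S^{2}$ still matches none of the admissible sizes $\{3,5,8\}$. A minor but essential bookkeeping point throughout is that all neighbourhoods on the sphere must be computed \emph{inside} $S^{2}$, so the ambient counts in $\mathbb{Z}^{3}$ are to be intersected with $[-1,1]_{\mathbb{Z}}^{3}\setminus\{0_{3}\}$ before any comparison is made.
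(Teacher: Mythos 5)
Your proposal is correct and takes essentially the same route as the paper's proof: both arguments compare the cardinality and the totally-disconnected-versus-connected character of the digital neighbourhoods on $S^{2}$ against the model neighbourhoods $N_{8}(s)$ (which have $3$, $5$, or $8$ points), and conclude that no digital isomorphism can exist. You are in fact more careful than the paper in two spots it leaves implicit --- the connectivity of the $N_{8}(s)$ sets, which is what actually rules out the $3$-point-versus-$3$-point case, and the explicit $6$-point corner computation for the $18$- and $26$-adjacencies, which the paper dispatches with only ``a similar process can be done.''
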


\begin{proof}
	Let $m_{1} \in \mathbb{Z}$ be nonzero. Then any points of the form $(m_{1},0)$ in $D_{+}^{2}$ admits that $N_{8}((m_{1},0))$ has $5$ adjacent points and any points of the form $(m_{1},m_{2})$ in $D_{+}^{2}$ with $m_{1}$, $m_{2} > 0$ admits that $N_{8}((m_{1},m_{2}))$ has $8$ adjacent points. In addition, $(0,0)$ admits that $N_{8}((0,0))$ has $3$ adjacent points. Since each point $m \in S^{2}$ admits that $N_{6}(m)$ has three or four totally disconnected points, there is no digital isomorphism from $N_{6}(m)$ to $N_{8}(s)$ for any $s \in D_{+}^{2}$ or $s \in \mathbb{Z}^{2}$. In addition, a similar process can be done for $S^{2}$ with the $18$ and $26-$adjacencies.   
\end{proof}

\quad The proofs of Proposition \ref{teo1} and Proposition \ref{teo2} can be generalized over the number of elements for corresponding $M_{\kappa_{l}}(m)$. So, we have the following result:

\begin{theorem}
	\textbf{i)} The digital $n-$sphere is a digital $n-$manifold with boundaries with the $\kappa_{1}-$adjacency, and for other adjacencies, it is a digital $n-$manifold, where $D_{+}^{n}$ has the $\kappa_{1}-$adjacency.
	
	\textbf{ii)} The digital $n-$sphere is not a digital $n-$manifold, where $D_{+}^{2}$ has the $\kappa_{l}-$adjacency with $l > 1$.
\end{theorem}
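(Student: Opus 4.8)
The plan is to reduce every required digital isomorphism to a comparison of local neighborhoods, following Proposition \ref{teo1} and Proposition \ref{teo2} (the cases $n=2$) as templates. The enabling observation is that if a digital neighborhood is totally disconnected, then a digital isomorphism onto another set is merely a bijection, so two totally disconnected neighborhoods are digitally isomorphic precisely when they have the same number of points. I would first record that under the $\kappa_1$-adjacency every neighborhood in sight is totally disconnected: two $\kappa_1$-neighbors of a point $m$ are obtained by altering a single coordinate of $m$, so they differ either in two coordinates or by $2$ in one coordinate, and in both cases they are not $\kappa_1$-adjacent.

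Next I would run the coordinate count. For $m \in S^n = [-1,1]_{\mathbb{Z}}^{n+1} \setminus \{0_{n+1}\}$ with exactly $k$ nonzero coordinates, each of the $n+1-k$ vanishing coordinates contributes two neighbors and each of the $k$ nonzero coordinates contributes one neighbor by being reset to $0$, the single exception being $k=1$, where that reset lands on the excluded origin. Hence $\lvert N_{\kappa_1}(m)\rvert = 2(n+1)-k$ for $k \geq 2$ and $\lvert N_{\kappa_1}(m)\rvert = 2n$ for $k=1$, so the sizes realized in $S^n$ fill exactly $\{n+1, n+2, \dots, 2n\}$, the minimum $n+1$ occurring at the $2^{n+1}$ corner points $(\pm 1, \dots, \pm 1)$. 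The same bookkeeping in $D_+^n$ under $\kappa_1$ gives $\lvert N_{\kappa_1}(s)\rvert = 2n-j$ when $s$ has $j$ vanishing coordinates, so the available sizes fill $\{n, n+1, \dots, 2n\}$. Because $\{n+1, \dots, 2n\} \subseteq \{n, \dots, 2n\}$, every point of $S^n$ can be matched with a model point of equal neighborhood size; the points with $k \geq 3$ (sizes below $2n$) are forced onto model points carrying a vanishing coordinate and hence comprise $\partial S^n$, while those with $k \in \{1,2\}$ go to interior model points. This proves part \textbf{i)} for $\kappa_1$. For $\kappa_l$ with $l \geq 2$ the model becomes $\mathbb{Z}^n$, which is vertex-transitive, so one would exhibit for each $m$ a pair $(\lambda, s)$ with $N_{\kappa_l}(m)$ digitally isomorphic to $N_\lambda(s)$; replacing $D_+^n$ by $\mathbb{Z}^n$ is what eliminates the boundary and yields a digital $n$-manifold (without boundary).

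For part \textbf{ii)} I would argue by a connectivity obstruction rather than a count. Keeping the $\kappa_1$-adjacency on $S^n$, every neighborhood $N_{\kappa_1}(m)$ is totally disconnected by the first paragraph. On the other hand, for $l > 1$ every point $s \in D_+^n$ (with $n \geq 2$) has two neighbors $s+e_i$ and $s+e_j$ whose difference $e_i-e_j$ is $\pm 1$ in two coordinates, so they are themselves $\kappa_l$-adjacent; thus $N_{\kappa_l}(s)$ always contains an adjacent pair and is never totally disconnected. A totally disconnected image cannot be digitally isomorphic to one that is not, so no digital chart $N_{\kappa_1}(m) \to N_{\kappa_l}(s)$ can exist, and $S^n$ fails to be a digital $n$-manifold modeled on $(D_+^n, \kappa_l)$; this is precisely the generalization of the numerology $\{3,4\}$ versus $\{3,5,8\}$ appearing in Proposition \ref{teo2}.

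The step I expect to be the real obstacle is the case $l \geq 2$ of part \textbf{i)}. There the neighborhoods $N_{\kappa_l}(m)$ are no longer totally disconnected, so the convenient shortcut ``equal cardinality implies isomorphic'' is unavailable and one must match the entire adjacency graph of $N_{\kappa_l}(m)$ against that of a model neighborhood in $\mathbb{Z}^n$, and do so across the genuinely inhomogeneous points of $S^n$ (corners, edge points, and so on carry visibly different local graphs). Making this structural comparison precise---in particular deciding whether a single $\lambda$ suffices or whether $\lambda$ must be allowed to depend on $m$---is where the actual work lies, and it is the point at which the informal phrase ``totally disconnected points'' used in Propositions \ref{teo1}--\ref{teo2} would have to be replaced by an explicit description of these local graphs.
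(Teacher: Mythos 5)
Your $\kappa_{1}$ analysis is correct and is, in substance, the proof the paper never actually supplies: the paper justifies this theorem only by asserting that Propositions \ref{teo1} and \ref{teo2} ``can be generalized over the number of elements'' of the neighborhoods, so your counting argument --- the observation that $\kappa_{1}$-neighborhoods are totally disconnected (hence a digital isomorphism is nothing more than a bijection, i.e.\ equal cardinality), the sizes $2(n+1)-k$ on $S^{n}$ versus $2n-j$ on $D_{+}^{n}$, and the inclusion $\{n+1,\dots,2n\}\subseteq\{n,\dots,2n\}$ --- is exactly the missing generalization, carried out correctly. Your part \textbf{ii)} is in fact tighter than the paper's own $n=2$ argument: in Proposition \ref{teo2} the raw counts alone do not separate the corner $(0,0)\in D_{+}^{2}$ (three $8$-neighbors) from a corner of $S^{2}$ (three $6$-neighbors); your observation that $N_{\kappa_{l}}(s)$ always contains the mutually adjacent pair $s+e_{i}$, $s+e_{j}$ when $l>1$, while $N_{\kappa_{1}}(m)$ is totally disconnected, is the obstruction that actually makes both the $n=2$ case and the general case go through. (Two smaller caveats: for $n=1$ your own count gives $\partial S^{1}=\emptyset$, so the ``with boundaries'' clause of part \textbf{i)} holds only for $n\geq 2$; and, like the paper, your part \textbf{ii)} treats only the $\kappa_{1}$-adjacency on the sphere itself.)

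The genuine gap is the one you flagged yourself: part \textbf{i)} for $l\geq 2$ is not proved, and it cannot be proved by the route you describe --- nor, it seems, at all under the paper's definitions. Concretely, take the corner $(1,1,1)\in S^{2}$ with the $18$-adjacency. Its neighborhood inside $S^{2}$ is $\{(0,1,1),(1,0,1),(1,1,0),(0,0,1),(0,1,0),(1,0,0)\}$: six points, not the ``exactly four totally disconnected points'' claimed in the proof of Proposition \ref{teo1}, and it is not totally disconnected (e.g.\ $(0,1,1)\sim_{18}(0,0,1)$). The only available models in $\mathbb{Z}^{2}$ are $N_{4}(s)$, with four points, and $N_{8}(s)$, with eight, so no digital chart exists at this corner, and $(S^{2},18)$ fails Definition \ref{def1}. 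So your instinct that the $l\geq 2$ case is ``where the actual work lies'' actually understates the situation: that half of the statement is false as written (the paper's supporting count in Proposition \ref{teo1} is itself erroneous), and the theorem would have to be restricted to the $\kappa_{1}$-adjacency on the sphere, or the definitions revised, before any argument could close this gap.
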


\quad Note that $[0,1] \times [0,1]$ is a 2-manifold with boundary in topological spaces but $[0,1]_{\mathbb{Z}} \times [0,1]_{\mathbb{Z}}$ is a digital $1-$manifold without boundary with the $4-$adjacency.

\quad Any open subset in $n-$dimensional Euclidean space is a topological $n-$manifold but this cannot be transferred into digital images.

\begin{proposition}
	A digital neighborhood $(N_{\kappa}(m),\kappa_{1})$ of any point $m \in \mathbb{Z}^{n}$ is a digital $0-$manifold. In addition, $(N_{\kappa}^{\ast}(p),\kappa_{l})$ need not be a digital manifold. 
\end{proposition}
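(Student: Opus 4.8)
The plan is to treat the two assertions separately, since the first is a direct structural computation while the second only requires a single counterexample.

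For the first claim I would take $m \in \mathbb{Z}^{n}$ and write its $\kappa_{1}$-neighborhood explicitly as the $2n$ axis-neighbors $N_{\kappa_{1}}(m) = \{m \pm e_{i} : 1 \leq i \leq n\}$, where $e_{i}$ is the $i$-th standard basis vector. The entire content is to verify that no two of these points are $\kappa_{1}$-adjacent when $N_{\kappa_{1}}(m)$ is regarded as a digital image in its own right. This is a short case analysis on two distinct points $m + \varepsilon e_{i}$ and $m + \delta e_{j}$ with $\varepsilon,\delta \in \{+1,-1\}$: either they lie on the same axis ($i = j$), forcing $\varepsilon = -\delta$ so that they differ by $2$ in coordinate $i$, or they lie on different axes ($i \neq j$), so that they differ by $1$ in two distinct coordinates. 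In both cases the defining condition of $\kappa_{1}$-adjacency (a single coordinate differing by exactly $1$, all others equal) fails. Hence the $\kappa_{1}$-neighborhood of every point of $N_{\kappa_{1}}(m)$ inside $N_{\kappa_{1}}(m)$ is empty, which is precisely $N_{\lambda}(s)$ for any $s \in \mathbb{Z}^{0}$. Since $n = 0$ is then the minimal admissible integer, Definition \ref{def1} gives that $(N_{\kappa_{1}}(m),\kappa_{1})$ is a digital $0$-manifold, in agreement with the $S^{0}$ example given earlier.

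For the second claim it suffices to exhibit one point $p$ and one adjacency for which $(N^{*}_{\kappa}(p),\kappa_{l})$ fails to be a manifold. I would take $p = (0,0) \in \mathbb{Z}^{2}$ with $\kappa_{l} = \kappa_{1}$ (the $4$-adjacency), so that
\begin{eqnarray*}
	N^{*}_{\kappa_{1}}(p) = \{(0,0),(1,0),(-1,0),(0,1),(0,-1)\}.
\end{eqnarray*}
Within this digital image the center $(0,0)$ is $4$-adjacent to all four outer points, while by the computation of the first part the four outer points are pairwise non-adjacent; thus the image is ``star''-shaped. I would then compute the relevant neighborhoods inside $N^{*}_{\kappa_{1}}(p)$: the center has the four (totally disconnected) outer points as its neighbors, whereas each outer point, for instance $(1,0)$, has exactly the single neighbor $(0,0)$, since its other lattice-neighbor $(2,0)$ does not lie in $N^{*}_{\kappa_{1}}(p)$.

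The crux — and the step I expect to be the real obstacle — is to argue that an outer point can never satisfy the manifold condition for any dimension. Here the key observation is a parity lemma: for every $s \in \mathbb{Z}^{n}$ and every adjacency $\lambda$, the reflection $s' \mapsto 2s - s'$ is a fixed-point-free involution of $N_{\lambda}(s)$, so $|N_{\lambda}(s)|$ is always even. An outer point of the star has a neighborhood of cardinality $1$, and a digital isomorphism is in particular a bijection and hence preserves cardinality; therefore no outer point's neighborhood can be digitally isomorphic to $N_{\lambda}(s)$ for any $s$ in any $\mathbb{Z}^{n}$ and any $\lambda$. This rules out every non-negative integer $n$ simultaneously, so $(N^{*}_{\kappa_{1}}(p),\kappa_{1})$ is not a digital manifold of any dimension, which establishes the ``need not be'' assertion. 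The remaining routine check, that the center is in fact compatible with $n = 2$, is not needed for the conclusion but confirms that the obstruction genuinely comes from the leaf-type points rather than from the center.
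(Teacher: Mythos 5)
Your proof is correct and follows the same skeleton as the paper's: the paper likewise observes that the points of $N_{\kappa_{1}}(m)$ are totally disconnected, so each point of the neighborhood has empty neighborhood inside it and the image is a digital $0$-manifold; and it uses exactly your counterexample $N_{4}^{\ast}((0,0)) \subset \mathbb{Z}^{2}$, noting that the outer point $(0,1)$ has one adjacent point while the center $(0,0)$ has four. Where you genuinely improve on the paper is in how the counterexample is closed off. The paper merely invokes ``the same method with Example \ref{exm2}'', i.e.\ a cardinality mismatch checked against model neighborhoods, and that example only explicitly rules out the dimensions $n=1$ and $n=2$; the exclusion of every other dimension is left implicit. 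Your parity lemma --- the reflection $s' \mapsto 2s - s'$ is a fixed-point-free involution of $N_{\lambda}(s)$, so every model neighborhood in every $\mathbb{Z}^{n}$ under every adjacency has even cardinality --- rules out all dimensions and all adjacencies simultaneously, since a leaf point of the star has a neighborhood of odd cardinality $1$. This makes the assertion ``not a digital manifold of any dimension'' airtight in a single stroke, and the lemma is reusable wherever the paper argues by neighborhood-size mismatches.
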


\begin{proof}
	Let $m$ be any point in $(\mathbb{Z}^{n},\kappa_{1})$. Then $N_{\kappa_{1}}(m)$ has totally disconnected $\kappa_{1}$ points. Choose $m^{'}$ as one of these points in $N_{\kappa_{1}}(m)$. Since $m^{'}$ has no $\kappa_{1}-$neighbours, the digital neighborhood $N_{\kappa_{1}}(m)$ of $m$ is emptyset. This means that it is a digital $0-$manifold. On the other hand, when we consider $(0,0) \in \mathbb{Z}^{2}$, the digital image $N_{4}^{\ast}((0,0)) = \{(0,0),(0,1),(-1,0),(0,-1),(1,0)\}$ is not a digital manifold by using the same method with Example \ref{exm2} (observe that $(0,1)$ has only one adjacent point but $(0,0)$ has four adjacent points.).
\end{proof}

\quad For topological manifolds, if it is connected, then its interior is also connected. This is not true for digital images. As for the counterexample, consider the digital image 
\begin{eqnarray*}
	M = ([0,4]_{\mathbb{Z}} \times [0,4]_{\mathbb{Z}}) - \{(2,2)\}
\end{eqnarray*}
with the $4-$adjacency (see Figure \ref{fig:2}). Then int$(M) = \{(1,1),(3,1),(3,3),(1,3)\}$ because the only elements of $M$ that have four adjacent points are $(1,1)$, $(1,3)$, $(3,1)$, and $(3,3)$. It follows that int$(M)$ is not digitally connected even if $M$ is digitally connected $2-$manifold with boundary.

\begin{figure}[h]
	\centering
	\includegraphics[width=0.40\textwidth]{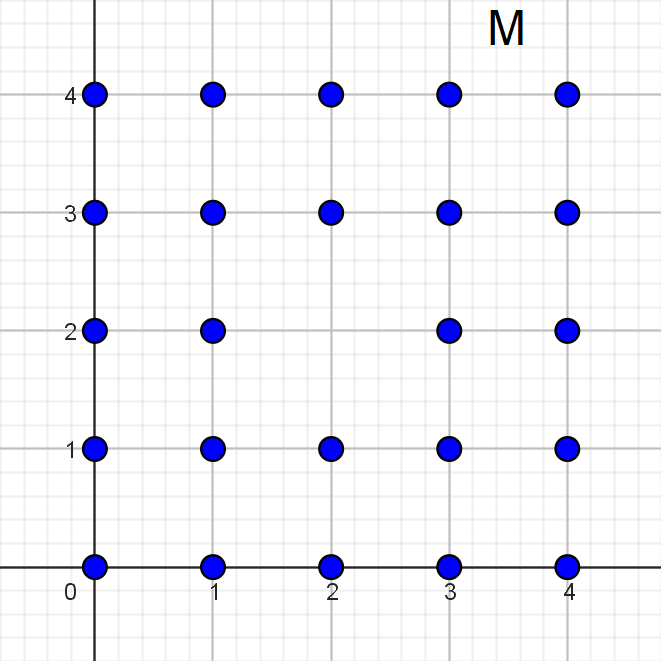}
	\caption{As an example of a digital $2-$manifold with boundary, the digital image $M = ([0,4]_{\mathbb{Z}} \times [0,4]_{\mathbb{Z}}) - \{(2,2)\}$ by considering the $4-$adjacency.}
	\label{fig:2}
\end{figure}

\begin{definition}
	Let $(M,\kappa)$ be a digital $n-$manifold. A subset $S$ of $M$ with the digital $r-$manifold structure for $r \leq n$ is a digital $r-$dimensional submanifold if $S$ is the image of an embedding of digital images, i.e., $\gamma : S \rightarrow M$ is a digital isomorphism onto its image. 
\end{definition}

\quad It is not always the case that $S$ and $M$ have the same dimension. For example, when we consider $\mathbb{Z}^{2}$ as the digital $2-$manifold, $\mathbb{Z}$ is a digital $1-$dimensional submanifold of $\mathbb{Z}^{n}$ by using a digital embedding $\gamma : \mathbb{Z} \rightarrow \mathbb{Z}^{2}$, $\gamma(m) = (m,0)$. 

\begin{example}
	A digital $(n-1)$-sphere is a digital $n-$dimensional submanifold of $\mathbb{Z}^{n}$. 
\end{example}

\quad In topological spaces, $\chi(M_{1} \times M_{2}) = \chi(M_{1})\chi(M_{2})$, where $\chi$ denotes the Euler characteristic of a space. Moreover, $\chi(M_{1}) \sqcup \chi(M_{2}) = \chi(M_{1}) + \chi(M_{2})$, where $\sqcup$ denotes the disjoint union.

\begin{theorem}
	Let $(M_{1},\kappa_{1})$ and $(M_{2},\kappa_{2})$ be any digital manifolds of any dimensions. Then $\chi(M_{1} \times M_{2})$ does not always equal to $\chi(M_{1})\chi(M_{2})$ in digital images. Moreover, $\chi(M_{1} \sqcup M_{2})$ does not always equal to $\chi(M_{1}) + \chi(M_{2})$ in digital images, too.
\end{theorem}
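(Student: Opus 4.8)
The plan is to read both assertions as non-identities and settle them by exhibiting explicit pairs of digital manifolds on which the two sides disagree; no general machinery is required, only the simplex-counting definition of $\chi$ and the manifold examples already in the text.

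For the product statement I would take $M_{1} = M_{2} = [0,1]_{\mathbb{Z}}$ with the $2$-adjacency, each of which is a digital $1$-manifold (with boundary) by the earlier proposition on digital intervals. Counting simplices, each factor has two $0$-simplices and one $1$-simplex, so $\chi(M_{1}) = \chi(M_{2}) = 2 - 1 = 1$ and hence $\chi(M_{1})\chi(M_{2}) = 1$. On the product $[0,1]_{\mathbb{Z}} \times [0,1]_{\mathbb{Z}} = \{0,1\}^{2}$ I would use the $4$-adjacency, under which (as already remarked in the text) it is a digital $1$-manifold. Its $\kappa$-simplices are then the four vertices and the four edges forming a single $4$-cycle, with no $2$-simplex, so $\chi(M_{1}\times M_{2}) = 4 - 4 = 0$. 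Thus $\chi(M_{1}\times M_{2}) = 0 \neq 1 = \chi(M_{1})\chi(M_{2})$, which disposes of the first half.

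For the disjoint-union statement I would take the two digital $0$-manifolds $M_{1} = \{0\}$ and $M_{2} = \{1\}$ in $\mathbb{Z}$ (single points, as with the components of $S^{0}$), so that $\chi(M_{1}) = \chi(M_{2}) = 1$ and $\chi(M_{1}) + \chi(M_{2}) = 2$. Realized inside $\mathbb{Z}$, their union $M_{1} \sqcup M_{2} = \{0,1\}$ inherits the relation $0 \sim_{2} 1$, hence becomes a digital interval with two $0$-simplices and one $1$-simplex, giving $\chi(M_{1}\sqcup M_{2}) = 2 - 1 = 1 \neq 2$. Replacing the points by the intervals $[0,1]_{\mathbb{Z}}$ and $[2,3]_{\mathbb{Z}}$, whose union is the interval $[0,3]_{\mathbb{Z}}$ with $\chi = 1 \neq 1 + 1$, yields the same conclusion with genuine $1$-manifolds and is the version I would probably print.

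The conceptual point I would stress, and the only delicate step, is that both inequalities are artifacts of the chosen adjacency, so the statement must fix that convention explicitly or an unwary reader will object. Indeed, if the product carried the $\mathrm{NP}(\kappa_{l},\kappa_{s})$-adjacency, i.e. the $\kappa_{l+s}$-adjacency, then every simplex of the product projects to a simplex in each factor while every subset of such a product $A \times B$ is again a simplex, and a short alternating-sum (inclusion–exclusion) computation shows the Euler characteristics would multiply; multiplicativity fails precisely because the manifold structure on $\{0,1\}^{2}$ forces the weaker $\kappa_{1}$-(here $4$-)adjacency, which deletes the two diagonal edges of the $K_{4}$ and opens a $1$-cycle. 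Dually, additivity would hold for a genuinely adjacency-free union, and the failure is engineered by placing $M_{1}$ and $M_{2}$ close enough in the ambient $\mathbb{Z}^{n}$ that their boundary points become $\kappa$-adjacent. I would therefore frame the proof around making these two adjacency conventions explicit, since that is exactly where the argument could otherwise be contested.
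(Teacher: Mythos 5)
Your proposal is correct and follows essentially the same route as the paper: the paper's proof of the product statement uses exactly your example $M_{1}=M_{2}=([0,1]_{\mathbb{Z}},2)$ with the $4$-adjacency on the product, and its proof of the union statement uses two disjoint $2$-point edges $\{(0,0),(1,0)\}$ and $\{(0,1),(1,1)\}$ in $\mathbb{Z}^{2}$ whose union becomes the $4$-cycle --- the same phenomenon as your singleton/interval examples in $\mathbb{Z}$, namely that embedding the ``disjoint'' pieces in the ambient lattice creates new adjacencies and hence new simplices. One point in your favor: the paper computes $\chi(M_{1}\times M_{2},4)=4-4-4=-4$ (and likewise $-4$ for the union), which is a miscount, since under the $4$-adjacency the four points span four $0$-simplices, four $1$-simplices and no $2$-simplex, so the correct value is $4-4=0$ as you state; the non-equality, and hence the theorem, holds either way. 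Your closing observation that multiplicativity \emph{would} hold under the $\mathrm{NP}(\kappa_{l},\kappa_{s})$-adjacency (and additivity under a genuinely adjacency-free union) is correct and is a useful clarification of where the failure comes from, though it is not needed for the proof itself.
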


\begin{proof}
	By assuming that $(M_{1},\kappa_{1})$ and $(M_{2},\kappa_{2})$ are both equal to $([0,1]_{\mathbb{Z}},2)$, we have that $\chi(M_{1}) = \chi(M_{2}) = 2 - 1 = 1$. Therefore, $\chi(M_{1})\chi(M_{2}) = 1$. On the other hand, we observe that $\chi(M_{1} \times M_{2},4) = 4 - 4 - 4 = -4$ with the $4-$adjacency. For the second statement, choose $M_{1}$ as $\{(0,0),(1,0)\}$ and $M_{2}$ as $\{(0,1),(1,1)\}$ by considering the $4-$adjacency. Thus, $\chi(M_{1}) + \chi(M_{2}) = 2$ but $\chi(M_{1} \sqcup M_{2}) = -4$.
\end{proof}

\quad Contractible spaces have Euler characteristic $1$ for topological manifolds. In digital images, digital $2-$sphere with the $18-$adjacency is digitally $18-$contractible but its Euler characteristic is equal to $-2$ \cite{KaracaBoxerOztel:2011}.

\quad An orientation of a digital $0-$manifold $M$ is a function $\epsilon : (M,\kappa) \rightarrow (\{\mp 1\},2)$. To compute the number of all orientations in a digital $1-$manifold, a linear order can be used. On a set $M$, the system of rays $\{m \in M \ | \ r < m\}$ for an element $r \in M$ encodes a linear order $<$.

\begin{proposition}
	A digital interval (a type of digital $1-$manifold) has exactly two linear orders.
\end{proposition}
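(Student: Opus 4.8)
The plan is to read a \emph{linear order} on the digital $1$-manifold $[a,b]_{\mathbb{Z}}$ as one that is compatible with the $2$-adjacency, in the sense that any element together with its immediate $<$-successor is $\kappa_1$-adjacent; this is the natural requirement under which, as in the preceding discussion, a linear order encodes an orientation of the underlying digital path. Under this reading the assertion is that exactly two such orders exist, namely the standard order $a < a+1 < \cdots < b$ and its reverse $b < b-1 < \cdots < a$, and that these are precisely the two orientations of the interval.

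First I would exhibit the two candidate orders and verify directly that each is a total order on $[a,b]_{\mathbb{Z}}$ whose consecutive pairs $(m,m')$ satisfy $|m-m'| = 1$, hence are $\kappa_1$-adjacent. This part is immediate from the definition of the digital interval and of the $2$-adjacency, so it contributes the lower bound of two.

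The substantive direction is uniqueness. Let $<$ be any compatible linear order and let $m_0$ be its least element. I would first show $m_0 \in \{a,b\}$. Suppose instead $m_0 = c$ with $a < c < b$ in the usual integer order; then $c$ has exactly the two $\kappa_1$-neighbours $c-1$ and $c+1$, and the immediate $<$-successor of $c$ must be $\kappa_1$-adjacent to $c$, hence one of these, say $c+1$. Proceeding along the order, each newly listed element must be adjacent to the previous one while avoiding elements already listed, which forces the run $c, c+1, c+2, \ldots, b$; but once $b$ is reached its only neighbour $b-1$ has already appeared, so no unused element is adjacent to the current maximum, while $c-1, \ldots, a$ remain unlisted. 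This contradicts totality of $<$, so $m_0$ must be an endpoint.

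Finally I would show that the least element determines the entire order. If $m_0 = a$, an induction on the position in $<$ shows that at each stage the $\kappa_1$-adjacency constraint leaves a unique unused neighbour (from $k$ the only unused $\kappa_1$-neighbour is $k+1$), yielding exactly the standard order; the case $m_0 = b$ is symmetric and gives the reverse order. Hence there are precisely two compatible linear orders. I expect the main obstacle to be the bookkeeping in this uniqueness step: making rigorous that compatibility together with the requirement of exhausting every point of $[a,b]_{\mathbb{Z}}$ removes all freedom once the least element is fixed, i.e.\ that the simple-path structure of the adjacency graph forbids any ``backtracking'' in the enumeration.
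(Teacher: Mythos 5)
Your proof is correct under your stated reading of compatibility, but it takes a genuinely different route from the paper's. The paper never makes the admissibility condition on linear orders explicit beyond the remark that an order is encoded by its system of rays $\{p \in M : r < p\}$; its proof fixes a point $r$, takes the two digitally connected components $U$ and $V$ of $M - \{r\}$, and argues that the rays of any other order must intersect these in ``disjoint open sets'' and hence coincide with $U$ or $V$, so the order is determined up to reversal --- a transcription of the classical topological argument for connected $1$-manifolds. You instead pin down compatibility combinatorially (each element is $\kappa_{1}$-adjacent to its immediate successor), which turns admissible orders into end-to-end enumerations of the path graph, and you eliminate all others by an extremal argument on the least element followed by an induction showing that the successor is forced at every step. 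The two notions of admissibility agree on $[a,b]_{\mathbb{Z}}$: successor-adjacency makes every ray a path, hence digitally connected, and conversely connectedness of all rays forces each initial segment $\{p : p \leq m_{k}\}$ to be a set of consecutive integers, which again yields only the standard order and its reverse; so the two proofs count the same objects. What your route buys is rigor and self-containedness: the paper's key step imports topological vocabulary (``open sets'') that has no assigned digital meaning, and the identification of the rays of the competing order with $U$ or $V$ is asserted rather than argued, whereas your induction closes every case. What the paper's route buys is conceptual alignment with orientation theory: the ray/component argument is the one that transfers to other digitally connected $1$-manifolds (where removing a point controls the components), while your argument as written is tied to the concrete path structure of the interval. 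Note also that both proofs silently exclude the degenerate one-point interval, which admits only one (empty) order --- harmless here, since a single point is a digital $0$-manifold rather than a $1$-manifold.
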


\begin{proof}
	Let $[a,b]_{\mathbb{Z}}$ be any digital interval. Then two linear orders $<$ and $>$ (we can think of these orders as from $a$ to $b$ and from $b$ to $a$, respectively) exist on this interval. It follows that there are two rays $\{p \in M \ | \ r < p\}$ and $\{p \in M \ | \ p < r\}$, the digitally connected components of $M - \{r\}$. When we consider any linear order $<<$ different from $<$, the rays $\{p \in M \ | \ r << p\}$ and $\{p \in M \ | \ p << r\}$ intersect $U$ and $V$ in $M - \{r\}$ in disjoint open sets. Since $U$ and $V$ are digitally connected, one of these rays is equal to $U$ or $V$. This means that $<<$ must be equal to $<$ or $>$. 
\end{proof}



\quad Thus, we have the following by the previous fact:

\begin{theorem}
	A digital interval has exactly two orientations.
\end{theorem}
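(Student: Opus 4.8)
The plan is to reduce the count of orientations to the count of linear orders, which the preceding proposition has already fixed at exactly two. First I would make precise the sense in which an orientation of a digital $1$-manifold is determined by a linear order. Following the setup introduced just before the previous proposition, a linear order $<$ on $[a,b]_{\mathbb{Z}}$ assigns to each interior point $r$ a pair of rays $\{p \mid p < r\}$ and $\{p \mid r < p\}$, equivalently a consistent local choice of a ``negative'' and a ``positive'' side at $r$. On the digital neighborhood $N_{\kappa}(r) = \{r-1, r+1\}$ this is precisely a labeling of the two neighbors by $\mp 1$, which is the data of a local orientation in the sense of the digital $0$-manifold $N_{\kappa}(r)$ via a function $\epsilon \colon N_{\kappa}(r) \to \{\mp 1\}$.

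Second, I would argue that such local labelings assemble into a global orientation exactly when they come from a single linear order. The compatibility condition is that along each adjacency $r \sim_{\kappa} r+1$ the ``positive'' direction agrees, i.e.\ the successor under the order is the neighbor labeled $+1$; since $[a,b]_{\mathbb{Z}}$ is digitally connected this forces the local data to be globally consistent, and a consistent global labeling is the same as a choice of linear order. This produces a map from linear orders on $[a,b]_{\mathbb{Z}}$ to orientations of $[a,b]_{\mathbb{Z}}$.

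Third, I would check that this map is a bijection. Injectivity is immediate: the distinct orders $<$ and $>$ reverse the roles of the two endpoints, so the induced labelings at $a$ and $b$ are interchanged and the two orientation functions differ. For surjectivity, given any global orientation one recovers a linear order by declaring $r < r'$ whenever the unique directed path from $r$ to $r'$ moves in the positive direction; digital connectedness guarantees this path exists and is unique, and transitivity follows by concatenating such paths. Combined with the preceding proposition, which supplies exactly two linear orders $<$ and $>$, the bijection yields exactly two orientations.

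The hard part will be the surjectivity step, namely verifying that an arbitrary orientation---specified pointwise through the $0$-manifold data $\epsilon$ on each $N_{\kappa}(r)$---actually integrates into a genuine linear order rather than merely a locally consistent direction field. The key technical point is that digital connectedness of $[a,b]_{\mathbb{Z}}$, together with the fact that each interior $N_{\kappa}(r)$ has exactly two totally disconnected points, excludes any branching or cyclic behavior, so the local directions assemble into a unique total order; once this is secured, the count follows immediately from the preceding proposition.
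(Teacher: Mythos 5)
Your proposal follows essentially the same route as the paper: the paper states this theorem as an immediate consequence of the preceding proposition on linear orders (``Thus, we have the following by the previous fact''), with no further argument given. Your explicit construction of the bijection between linear orders and orientations is correct in spirit and in fact supplies the correspondence that the paper leaves entirely implicit, so it is a faithful (and more detailed) rendering of the intended proof rather than a different one.
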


\quad Recall that a digital bundle with a digital fiber is given in \cite{KaracaVergili:2011}. A special kind of digital fiber bundle is called a digital vector bundle if the digital fiber is a vector space. The default type of a digital vector bundle is a digital real vector bundle, which has a digital fiber that is a real vector space. Let $M \rightarrow N$ be a digital real vector bundle of rank $s$. Then the $j-$th digital Stiefel-Whitney class is defined as the digital cohomology class $w_{j}(M) \in H^{j,q}(N;\mathbb{Z}_{2})$ for $0 \leq j \leq s$.

\begin{definition}
	A digital manifold $M$ is orientable (otherwise non-orientable) if and only if the first digital Stiefel-Whitney class is zero (otherwise nonzero).
\end{definition}

\begin{example}
	Consider a digital $2-$manifold $$M_{1} = \{(1,1,0),(0,2,0),(-1,1,0),(0,0,0),(0,1,-1),(0,1,1)\}$$ with the $18-$adjacency. By Example 4.2 in \cite{EgeKaraca:2013}, we have that $H^{1,18}(M_{1};\mathbb{Z}_{2}) = 0$. This shows that $w_{1}(M_{1}) = 0$, which concludes that $M_{1}$ is an orientable digital $2-$manifold.
\end{example}

\begin{example}
	Let $M_{2} = [0,1]_{\mathbb{Z}} \times [0,1]_{\mathbb{Z}} \times [0,1]_{\mathbb{Z}}$ be a digital $2-$manifold with boundary with respect to the $6-$adjacency. In \cite{Gulseli:2014}, we know that $H^{1,6}(M_{2};\mathbb{Z}_{2}) = \mathbb{Z}_{2}^{5}$. It follows that $w_{1}(M_{2})$ need not be zero, which concludes that $M_{2}$ is a non-orientable digital $2-$manifold with boundary.
\end{example}

\begin{definition}
	Let $(M,\kappa)$ be a digital image and $f : M \rightarrow \mathbb{Z}$ a function. Then a digital support of $f$ is defined as sp$(f) = \{p \in M : f(p) \neq 0\}$.
\end{definition}

\quad The definition of digital support directly gives some properties:

\begin{itemize}
	\item $p \notin$ sp$(f)$ implies that $f(p)$ must be zero.
	\item The fact sp$(f)$ is the empty set implies that $f$ vanishes. 
\end{itemize} 

\begin{proposition}
	Let $f$, $g : (M,\kappa) \rightarrow (\mathbb{Z},2)$ be two functions. Then
	
	\textbf{a)} sp$(f.g) =$ sp$(f)$ $\cap$ sp$(g)$.
	
	\textbf{b)} sp$(f+g) =$ sp$(f)$ $\cup$ sp$(g)$.
\end{proposition}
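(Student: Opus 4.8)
The plan is to reduce both identities to elementary pointwise statements about the integers, since a support is defined purely by a pointwise non-vanishing condition. For each set equality I would argue by establishing, for an arbitrary fixed point $p \in M$, the logical equivalence of "$p$ lies in the left-hand support" with "$p$ lies in the right-hand support"; because $p$ is arbitrary this yields the equality of subsets of $M$ directly, with no recourse to the adjacency structure $\kappa$ (the statement is really about the target $(\mathbb{Z},2)$).

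For part \textbf{a)}, the key input is that $\mathbb{Z}$ has no zero divisors. Fixing $p$, I would note that $(f\cdot g)(p) = f(p)g(p)$, and the product of two integers is $0$ exactly when one of the factors is $0$. Taking contrapositives, $f(p)g(p) \neq 0$ holds if and only if $f(p) \neq 0$ and $g(p) \neq 0$; that is, $p \in \mathrm{sp}(f\cdot g)$ if and only if $p \in \mathrm{sp}(f)$ and $p \in \mathrm{sp}(g)$. Since this is precisely membership in $\mathrm{sp}(f)\cap\mathrm{sp}(g)$, part \textbf{a)} follows. This half is routine and I expect no difficulty here.

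For part \textbf{b)}, I would first dispatch the easy inclusion $\mathrm{sp}(f+g)\subseteq \mathrm{sp}(f)\cup\mathrm{sp}(g)$ by contraposition: if $p\notin \mathrm{sp}(f)\cup\mathrm{sp}(g)$ then $f(p)=g(p)=0$, so $(f+g)(p)=0$ and $p\notin \mathrm{sp}(f+g)$. The main obstacle is the reverse inclusion $\mathrm{sp}(f)\cup\mathrm{sp}(g)\subseteq\mathrm{sp}(f+g)$. Unlike multiplication, addition in $\mathbb{Z}$ admits cancellation, so $(f+g)(p)$ can vanish even when both $f(p)$ and $g(p)$ are nonzero; the minimal witness is $f(p)=1$, $g(p)=-1$, giving $(f+g)(p)=0$ with $p\in \mathrm{sp}(f)\cap\mathrm{sp}(g)$. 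Hence the claimed equality cannot hold for arbitrary integer-valued $f,g$, and I would expect this step to be where the argument must be adjusted.

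To close the gap I would impose the hypothesis that is implicit in the partition-of-unity context motivating this section, namely that $f$ and $g$ take values of a single sign (for instance, values in the non-negative integers, or more generally $f(p)g(p)\geq 0$ for all $p$). Under such a no-cancellation assumption, $f(p)+g(p)=0$ forces $f(p)=g(p)=0$, so any $p$ with $f(p)\neq 0$ or $g(p)\neq 0$ satisfies $(f+g)(p)\neq 0$, giving the reverse inclusion and hence the equality. Absent that hypothesis, the honest statement is the inclusion $\mathrm{sp}(f+g)\subseteq\mathrm{sp}(f)\cup\mathrm{sp}(g)$, and I would record part \textbf{b)} in that corrected form.
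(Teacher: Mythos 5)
Your part \textbf{a)} is exactly the paper's argument: fix $p$, use that $\mathbb{Z}$ has no zero divisors to get $f(p)g(p)\neq 0 \Leftrightarrow f(p)\neq 0 \wedge g(p)\neq 0$, and conclude by arbitrariness of $p$. Nothing to add there.

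For part \textbf{b)} you have caught a genuine error in the paper rather than a gap in your own argument. The paper's proof runs the same pointwise chain as in part a), asserting
\begin{equation*}
f(p)+g(p) \neq 0 \ \Leftrightarrow \ f(p) \neq 0 \ \vee \ g(p) \neq 0,
\end{equation*}
but this biconditional holds only in the forward direction (equivalently, your easy inclusion $\mathrm{sp}(f+g)\subseteq \mathrm{sp}(f)\cup\mathrm{sp}(g)$ by contraposition); the converse fails precisely because of the cancellation you point out, with $f(p)=1$, $g(p)=-1$ giving $p\in\mathrm{sp}(f)\cup\mathrm{sp}(g)$ but $p\notin\mathrm{sp}(f+g)$. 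So the proposition as stated is false for arbitrary integer-valued functions, and the paper's proof silently uses a false equivalence. Your two repairs are both sound and are the right ones: either record only the inclusion $\mathrm{sp}(f+g)\subseteq\mathrm{sp}(f)\cup\mathrm{sp}(g)$, or add the no-cancellation hypothesis $f(p)g(p)\geq 0$ for all $p$ (in particular nonnegativity), under which $f(p)+g(p)=0$ forces $f(p)=g(p)=0$ and equality is restored. The sign hypothesis is also the natural one in context, since the digital partitions of unity defined later in the section consist of nonnegative functions, so the corrected statement still serves the purpose the paper needs it for.
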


\begin{proof}
	Let $p \in \mathbb{Z}$ be any point.
	  
	\textbf{a)} Then we have
	\begin{eqnarray*}
		p \in \text{sp}(f.g) &\Leftrightarrow& (f.g)(p) \neq 0 \Leftrightarrow f(p).g(p) \neq 0 \Leftrightarrow f(p) \neq 0 \ \wedge \ g(p) \neq 0 \\ &\Leftrightarrow& x \in \text{sp}(f) \ \wedge \ p \in \text{sp}(g) \Leftrightarrow p \in \text{sp}(f) \cap \text{sp}(g).
	\end{eqnarray*}
	
	\textbf{b)} Then we have
	\begin{eqnarray*}
		p \in \text{sp}(f+g) &\Leftrightarrow& (f+g)(p) \neq 0 \Leftrightarrow f(p)+g(p) \neq 0 \Leftrightarrow f(p) \neq 0 \ \vee \ g(p) \neq 0 \\ &\Leftrightarrow& p \in \text{sp}(f) \ \vee \ p \in \text{sp}(g) \Leftrightarrow p \in \text{sp}(f) \cup \text{sp}(g).
	\end{eqnarray*}
\end{proof}

\begin{proposition}
	Let $f : (M,\kappa) \rightarrow (\mathbb{Z},2)$ be a function and $\alpha : (M^{'},\kappa^{'}) \rightarrow (M,\kappa)$ a digital isomorphism. Then
	sp$(f \circ \alpha) =$ $\alpha^{-1}(\text{sp}(f))$.
\end{proposition}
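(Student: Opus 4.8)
The plan is to prove that the two sets coincide by a direct element-chasing argument, since both $\mathrm{sp}(f \circ \alpha)$ and $\alpha^{-1}(\mathrm{sp}(f))$ are subsets of $M^{'}$. First I would fix an arbitrary point $q \in M^{'}$ and unwind the two definitions in parallel. On one side, $q \in \mathrm{sp}(f \circ \alpha)$ means, by the definition of digital support, that $(f \circ \alpha)(q) \neq 0$, i.e.\ $f(\alpha(q)) \neq 0$. On the other side, $q \in \alpha^{-1}(\mathrm{sp}(f))$ means precisely that $\alpha(q) \in \mathrm{sp}(f)$, which by the definition of support is again the condition $f(\alpha(q)) \neq 0$. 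Chaining these biconditionals gives the equivalence $q \in \mathrm{sp}(f \circ \alpha) \Leftrightarrow q \in \alpha^{-1}(\mathrm{sp}(f))$, and since $q$ was arbitrary the two sets are equal.

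I would present the core of the argument as a single equivalence display, mirroring the format already used in the preceding proposition:
\begin{eqnarray*}
	q \in \mathrm{sp}(f \circ \alpha) &\Leftrightarrow& (f \circ \alpha)(q) \neq 0 \Leftrightarrow f(\alpha(q)) \neq 0 \\
	&\Leftrightarrow& \alpha(q) \in \mathrm{sp}(f) \Leftrightarrow q \in \alpha^{-1}(\mathrm{sp}(f)).
\end{eqnarray*}

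There is essentially no obstacle here: the statement is a formal consequence of the definition of support together with the definition of set-theoretic preimage, and it does not genuinely use that $\alpha$ is bijective or $(\kappa^{'},\kappa)-$continuous — only that $\alpha$ is a map, so that both $f \circ \alpha$ and the preimage $\alpha^{-1}(\mathrm{sp}(f))$ are well-defined. The one point worth flagging is that $\alpha^{-1}$ must be read as the preimage operator rather than as the inverse isomorphism; with that reading the identity holds for any function $\alpha$, so the digital isomorphism hypothesis is simply stronger than what the conclusion actually requires.
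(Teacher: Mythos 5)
Your proof is correct, but it takes a genuinely different (and in fact more general) route than the paper's. You fix $q \in M^{'}$ and chase definitions through the \emph{preimage} operator, obtaining the chain $q \in \mathrm{sp}(f \circ \alpha) \Leftrightarrow f(\alpha(q)) \neq 0 \Leftrightarrow \alpha(q) \in \mathrm{sp}(f) \Leftrightarrow q \in \alpha^{-1}(\mathrm{sp}(f))$; as you correctly flag, this never uses that $\alpha$ is bijective or digitally continuous, so the identity holds for an arbitrary map $\alpha$. The paper instead computes the \emph{forward} image: it shows $\alpha(\mathrm{sp}(f \circ \alpha)) = \alpha(\{p^{'} \in M^{'} : f(\alpha(p^{'})) \neq 0\}) = \{p \in M : f(p) \neq 0\} = \mathrm{sp}(f)$, and then concludes $\mathrm{sp}(f \circ \alpha) = \alpha^{-1}(\mathrm{sp}(f))$. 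That route genuinely uses the isomorphism hypothesis twice: surjectivity of $\alpha$ is what makes the middle step an equality of sets (every $p \in M$ with $f(p) \neq 0$ is of the form $\alpha(p^{'})$), and injectivity is what licenses passing from $\alpha(A) = B$ to $A = \alpha^{-1}(B)$, since in general one only has $A \subseteq \alpha^{-1}(\alpha(A))$. So your argument is the more elementary one and exposes the proposition as a purely set-theoretic fact about preimages, with the isomorphism hypothesis strictly stronger than needed; the paper's argument is the natural one if $\alpha^{-1}$ is insisted upon as the inverse isomorphism rather than the preimage operator, in which case the bijectivity it invokes is exactly what makes the two readings of $\alpha^{-1}(\mathrm{sp}(f))$ agree.
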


\begin{proof}
	Let $p^{'} \in M^{'}$ be any element. Then
	\begin{eqnarray*}
		\alpha (\text{sp}(f \circ \alpha) = \alpha(\{p^{'} \in M^{'} : f(\alpha(p^{'})) \neq 0\}) = \{p \in M: f(p) \neq 0\} = \text{sp}(f). 
	\end{eqnarray*}
    Ths shows that sp$(f \circ \alpha) =$ $\alpha^{-1}(\text{sp}(f))$.
\end{proof}

\begin{definition}
	A digital partition of unity of a digital manifold $(M,\kappa)$ is the set $$\{\alpha_{i} : M \rightarrow \mathbb{Z} \ | \ \alpha_{i} \ \text{is digitally continuous for each} \ i \in [0,m]_{\mathbb{Z}}\}$$ for which the following properties hold:
	\begin{itemize}
		\item[1.] Each $\alpha_{i}$ is nonnegative.
		\item[2.] Every point $p \in M$ admits a digital neighborhood $N_{\kappa}(p)$ with the property that $N_{\kappa}(p) \cap \text{sp}(\alpha_{i}) \neq \emptyset$ for all $i$.
		\item[3.] For each $p \in M$, $g_{1}(p) + \cdots + g_{m}(p) = m$.
	\end{itemize}
\end{definition}

\begin{definition}
	A digital partition of unity $\{\alpha_{i}\}_{i \in [0,m]_{\mathbb{Z}}}$ on a digital manifold $(M,\kappa)$ is subordinate to a cover $\{M_{1},M_{2},\cdots,M_{r}\}$ of $M$ provided that each $\alpha_{i}$ admits a digital image $M_{i}$ of the cover with the property sp$(\alpha_{i}) \subset M_{i}$.
\end{definition}

\begin{example}
	Consider the digital $2-$manifold $\mathbb{Z}^{2}$. Let $m > 1$ be a positive integer and define a cover $\{M_{1},M_{2}\}$ of $\mathbb{Z}^{2}$ by $M_{1} = \{(p_{1},p_{2}) : p_{1} < m\}$ and \linebreak$M_{2} = \{(p_{1},p_{2}) : p_{1} > 0\}$. The intersection of $M_{1}$ and $M_{2}$ is $\{(p_{1},p_{2}) : 0 < p_{1} < m\}$, which is a digital $2-$manifold with boundary. Consider two functions $\alpha_{1} : \mathbb{Z}^{2} \rightarrow \mathbb{Z}$ and $\alpha_{2} : \mathbb{Z}^{2} \rightarrow \mathbb{Z}$ defined by
	\begin{eqnarray*}
		\alpha_{1}(p_{1},p_{2}) = \begin{cases}
			m, & \text{if} \ p_{1} \leq 0 \\
			m-p_{1}, & \text{if} \ 0 < p_{1} < m \\
			0, & \text{if} \ p_{1} \geq m 
		\end{cases}
	\end{eqnarray*}
    and
    \begin{eqnarray*}
    	\alpha_{2}(p_{1},p_{2}) = \begin{cases}
    		0, & \text{if} \ p_{1} \leq 0 \\
    		p_{1}, & \text{if} \ 0 < p_{1} < m \\
    		m, & \text{if} \ p_{1} \geq m 
    	\end{cases},
    \end{eqnarray*}
    respectively. Each $\alpha_{i} \geq 0$ is digitally continuous for $i \in \{1,2\}$ and supported with the corresponding set $M_{i}$. When we consider any point $(p_{1},p_{2}) \in \mathbb{Z}^{2}$, we observe that $\alpha_{1}(p_{1},p_{2}) + \alpha_{2}(p_{1},p_{2}) = m$. If we consider any point $(p_{1},p_{2}) \in M_{1} \cap M_{2}$, then we have that $N_{4}((p_{1},p_{2})) \cap \text{sp}(\alpha_{1})$ and $N_{4}((p_{1},p_{2})) \cap \text{sp}(\alpha_{2})$ are nonempty. Moreover, we get sp$(\alpha_{1}) \subset M_{1}$ and sp$(\alpha_{2}) \subset M_{2}$. Finally, we conclude that $\{\alpha_{1},\alpha_{2}\}$ forms a digital partition of unity on $M_{1} \cap M_{2}$ subordinate to a cover $\{M_{1},M_{2}\}$.  
\end{example}

\section{Conclusion and Open Problems}
\label{conc:5}

\quad Many important homotopy, homology, and cohomology calculations in digital topology are performed on digital curves and surfaces, so studying digital manifolds takes these to the next level. It also opens the way for classifications of digital manifolds, a generalization of digital curves and digital surfaces often used in applications such as animated films, video games, and medical imaging. In topological robotics, a manifold of configurations is often used to model the space in which robots move. Therefore, the robot's journey from the beginning point to the destination point in motion planning difficulties turns into a path planning problem on the manifold. As an out-of-topology example, high-dimensional data often reside on low-dimensional manifolds. Thus, by examining these structures, manifold learning approaches offer data visualization and dimensionality reduction. These illustrations demonstrate the usage of manifolds in a variety of domains, including digital topology, ranging from pure mathematics to applied sciences.
 
\quad The primary goal of this paper is to examine the definition of the term ''digital manifold'' based on digital images, not cells or simplexes. More precisely, the local homeomorphism property is explicitly investigated on digital images based on adjacency relations. After this, the rest of the paper discusses notions, such as submanifold, orientation, and partition of unity, related to the concept of digital manifold. Moreover, some counterexamples of digital images are presented. Besides, a few open problems are left for the readers in this section.

\textbf{Open Problem 1:} Let $(M_{1},\kappa)$ be a digital $n_{1}-$manifold, $(M_{2},\lambda)$ a digital $n_{2}-$manifold, and $n_{3} \leq n_{1}+n_{2}$. Then is it true or not that their Cartesian product $(M_{1} \times M_{2},\text{NP}(\kappa,\lambda))$ is a digital $n_{3}-$manifold? 

\textbf{Open Problem 2:} Are the only digital images to which a digitally connected $1-$manifold is digitally isomorphic the digital $1-$sphere, the digital interval, and the set $M = \{(0,0),(0,1),(1,0),(1,1)\}$? Or are there any other digitally connected $1-$manifolds? Do they always have orientations of $2$?

\textbf{Open Problem 3:} Based on the solution of Open Problem 2, can we say that a digitally connected $1-$manifold is either digitally contractible or digitally homotopy equivalent to a digital $1-$sphere?

\textbf{Open Problem 4:} Is it true that every $n-$dimensional manifold with boundary lies in an $n-$dimensional manifold?

\textbf{Open Problem 5:} Can we define a ''digital smooth (differentiable) manifold''? To answer this, it is valuable to comment on the derivative definitions in digital images. In addition, how do we define the orientation of a digital smooth manifold for $n>1$?

\end{document}